\newcommand{\Sym}{\mathop{\mathrm{Sym}}}
\newcommand{\Alt}{\mathop{\mathrm{Alt}}}
\renewcommand{\wr}{\mathop{\mathrm{wr}}}
\newtheorem{theorem}{Theorem}[section]
\newtheorem{corollary}[theorem]{Corollary}
\newtheorem{lemma}[theorem]{Lemma}
\newtheorem{remark}[theorem]{Remark}
\numberwithin{equation}{section}
\author[D.~Bubboloni]{Daniela Bubboloni}
\address{Daniela Bubboloni, Dipartimento di Matematica e Informatica,\newline
University of Firenze, \newline Viale Morgagni  $67$/a, 50134 Firenze, Italy}
\email{daniela.bubboloni@unifi.it}
\author[C. E. Praeger]{Cheryl E. Praeger}
\address{Cheryl E. Praeger, Centre for Mathematics of Symmetry and Computation,\newline
School of Physics, Mathematics, and Computing,\newline
The University of Western Australia,\newline
 Crawley, WA 6009, Australia}\email{Cheryl.Praeger@uwa.edu.au}
\author[P. Spiga]{Pablo Spiga}
\address{Pablo Spiga,
Dipartimento di Matematica e Applicazioni,\newline
 University of Milano-Bicocca,\newline Via Cozzi 55, 20125 Milano, Italy
}
\email{pablo.spiga@unimib.it}
\thanks{The first author is partially supported by GNSAGA of INdAM.
 The second author is supported by Australian Research Council Discovery Project Grant
 DP130100106}
\subjclass[2010]{20B30, 20F05}
\keywords{symmetric groups; conjugacy classes; normal coverings; partitions.}
\begin{document}
\title[Linear bounds ]{ Linear bounds for the normal covering number of the symmetric and alternating groups}

\begin{abstract}
The normal covering number $\gamma(G)$ of a finite, non-cyclic group $G$ is the minimum number
of proper subgroups  such that each element of $G$  lies in some conjugate
of one of these subgroups. We find lower bounds linear in $n$ for $\gamma(S_n)$, when $n$ is even, and  for $\gamma(A_n)$, when $n$ is odd.
\end{abstract}

\maketitle
\section{Introduction}\label{sec:0}
Let $G$ be a non-cyclic finite group. We write $\gamma(G)$ for the
smallest number of conjugacy classes of proper subgroups of $G$ needed to cover it and we call $\gamma(G)$ the {\em normal covering number} of $G$.
In other words, $\gamma(G)$ is the least $s$ for which there exist subgroups $H_1,\ldots,H_s<G$
such that
$$G =
\bigcup_{g\in G}\bigcup_{i=1}^s H_i^g.$$
We say that $\Sigma :=\{H_i^g \mid 1 \le i \le s,\, g \in G\}$ is a {\em normal covering}
for $G$ and the subgroups in $\Sigma$ are called its {\em components}. Moreover, the $H_i$ are called the {\em basic components} of $\Sigma$
 and  $\delta := \{H_1 , \ldots , H_s \}$ a {\em basic set} for $G$ which generates $\Sigma$. If $s=\gamma(G)$, we say that  $\Sigma$ is a {\em minimal normal covering} for $G$ and that  $\delta$ is a {\em minimal basic set}.

The problem of determining the normal covering number of a finite group arises from Galois theory and, broadly speaking, it is related to the investigation of integer polynomials having a root modulo $p$, for every prime number $p$, see~\cite{BBH}, ~\cite{BS} and ~\cite{RS}. Because of the ubiquity of the alternating and the symmetric groups as Galois groups, determining good lower and upper bounds for $\gamma(G)$ when $G$ is one of those groups is a natural and important problem. From now on, let $G$ be equal to $\Sym(n)$ with $n\geq 3,$ or to $\Alt(n)$ with $n \geq 4$. Note that for smaller values of $n$, the numbers $\gamma(\Sym(n))$ and  $\gamma(\Alt(n))$ are not defined, since normal coverings do not exist for cyclic groups.

The first two authors of this paper found~\cite[Theorems~A and~B]{JCTA} that $$a\varphi(n) \le \gamma(G) \leq bn,$$ where $\varphi(n)$ is the Euler totient
function and $a$, $b$ are positive real constants depending on whether $G$ is alternating
or symmetric and whether $n$ is even or odd.
The bounds above on the normal covering number open the question on whether $\gamma(G)$ grows linearly with the degree $n$ or with $\varphi(n)$. In~\cite{BPS}, the authors gave an answer to this question, showing that
\begin{equation}\label{lb}
cn\leq \gamma(G)\leq \frac{2}{3}n,
\end{equation}
for some explicitly computable constant $c\in(0,1/2]$. For instance, when $n$ is even and $n\ge 792\, 000$, $\gamma(\Sym(n))\ge 0.025\cdot n$, see~\cite[Remark~$6.5$]{BPS}.

That result was reached with a combination of two techniques: the first, group-theoretic related to an invariant defined by Britnell and Mar\'oti in~\cite{BM}, the second, number theoretic related to the asymptotic enumeration of certain partitions satisfying some coprimality conditions~\cite{BFS}. (Incidentally, the group invariant introduced by Britnell and Mar\'oti turns out to be an excellent tool for the determining the normal covering number of linear groups~\cite{BM}.)

Comparing the linear lower bound in \eqref{lb} with computer computations, one quickly realises that there is a lot of room for improvement. Somehow this is reflected by the fact that ultimately the proofs in~\cite{BPS} use number theoretic results on partitions, which do not entirely capture the group structure of the symmetric and alternating groups.

In this paper we use the classification in \cite{GMPS1,GMPS} of finite primitive permutation groups containing a
permutation with at most four cycles to improve the results in~\cite{BPS}. We obtain a new and largely better linear lower bound for the normal covering number of the symmetric and alternating groups.
\smallskip

Modern discrete mathematics  uses computers extensively for preliminary observations, for proving ``small" cases, for checking or for constructing  ``working conjectures''. Based on extensive computer computations, we proposed the following conjecture, which appeared as Problem 18.23 in the Kourovka notebook~\cite[18.23]{kourovka}.
\smallskip

\noindent {\it Conjecture\/}: Write $n=p_1^{\alpha_1}\cdots p_r^{\alpha_r}$ for primes  $p_1<\cdots<p_r$ and positive integers $\alpha_1,\ldots,\alpha_r$, $r\in\mathbb{N}$. Then

\begin{align}\label{dag}
 \gamma(\Sym(n))&=\left\{
\begin{array}{lcl}
\frac{n}{2}\left(1-\frac{1}{p_1}\right)&&\textrm{if }r=1\,\,\textrm{and}\,\,\alpha_1=1\\
\frac{n}{2}\left(1-\frac{1}{p_1}\right)+1&&\textrm{if }r=1\,\,\textrm{and}\,\,\alpha_1\geq 2\\
\frac{n}{2}\left(1-\frac{1}{p_1}\right)\left(1-\frac{1}{p_2}\right)+1&&\textrm{if }r=2\,\,\textrm{and}\,\,\alpha_1+\alpha_2=2\\
\frac{n}{2}\left(1-\frac{1}{p_1}\right)\left(1-\frac{1}{p_2}\right)+2&&\textrm{if }r\geq 2\,\,\textrm{and}\,\,\alpha_1+\cdots+\alpha_r\geq 3\\
\end{array}
\right.
\end{align}
This is the strongest form of our conjecture. We were really interested in knowing whether it was true for sufficiently large values of $n$. The equalities for $r\leq 2$, which include the first three cases above, were proved  for $n$ odd, see~\cite{JCTA}. Moreover, we showed in~\cite{IJGT}, that the conjectured value for $\gamma(\Sym(n))$ is an upper bound, by constructing a normal covering for $\Sym(n)$ with the conjectured number of conjugacy classes of maximal subgroups,  and we gave further evidence for the truth of the conjecture in other cases. However, we emphasise again that the main evidence for the conjectured value of $\gamma(\Sym(n))$ came from our computer calculations.

\smallskip

Recently Attila Mar\'oti showed, in private correspondence with the authors, that this conjecture is incorrect and we present his counterexample in Section \ref{symmetric-part}, with his permission. Mar\'oti asked, in the same correspondence, whether perhaps  $\gamma(\Sym(n))>\frac{n}{\pi^2}.$ In this paper we prove a better lower bound than that suggested by Mar\'oti for the case where $n$ is even, and we prove similar lower bounds for $\gamma(\Alt(n))$ for $n$ odd.

\begin{theorem}\label{t:summary}
Suppose that $n\ge 20$, and either $G=\Sym(n)$ with $n$ even or $G=\Alt(n)$ with $n$ odd. Then
$$
\gamma(G)\ge
\frac{n}{2}\left(1-\sqrt{1-8\zeta_2(n)}\right)-\sqrt{n}, \quad \mbox{where}\quad
\zeta_2(n):=\frac{1}{12}\prod_{\substack{p\mid n\\p\,\mathrm{ prime}}}\left(1-\frac{1}{p^2}\right).
$$
In particular,
$$
\gamma(G)\ge
\frac{n}{2}\left(1-\sqrt{1- 4/\pi^2}\right)-\frac{\sqrt{17}}{2}n^{3/4}\ \approx\ 0.114411\, n -\frac{\sqrt{17}}{2}n^{3/4}.
$$
\end{theorem}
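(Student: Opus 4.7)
The plan is to exhibit a large family $\mathcal{C}$ of conjugacy classes of three-cycle permutations in $G$, to bound how many classes of $\mathcal{C}$ each maximal subgroup of $G$ can contain, and then to combine these bounds by double counting. The classification \cite{GMPS1,GMPS} of primitive permutation groups containing a permutation with at most four cycles is the key input for handling the primitive components in any minimal normal covering.

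First, take a minimal basic set $\delta=\{H_1,\ldots,H_s\}$ of $G$ with $s=\gamma(G)$ and each $H_i$ maximal in $G$, and sort the $H_i$ by O'Nan--Scott type into intransitive (conjugate to the intersection of $G$ with some $\Sym(a)\times\Sym(n-a)$), transitive imprimitive (contained in the intersection of $G$ with some $\Sym(m)\wr\Sym(n/m)$ for a proper divisor $m$ of $n$), and primitive. Let $\mathcal{C}$ be the collection of $G$-conjugacy classes of elements of cycle type $(c,d,e)$ with $c+d+e=n$, $c,d,e\ge 2$ and $\gcd(c,d,e)=1$. Since the sign of such an element is $(-1)^{n-3}$, the classes in $\mathcal{C}$ lie in $\Sym(n)\setminus\Alt(n)$ when $n$ is even and in $\Alt(n)$ when $n$ is odd, so in both cases under consideration they are genuine $G$-classes. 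A standard M\"obius inversion then gives
\[
|\mathcal{C}|\;=\;\frac{n^{2}}{12}\prod_{\substack{p\mid n\\ p\text{ prime}}}\Bigl(1-\frac{1}{p^{2}}\Bigr)+O(n)\;=\;n^{2}\zeta_{2}(n)+O(n),
\]
which is the combinatorial source of $\zeta_2(n)$ in the theorem.

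Next, bound the coverage of each basic component. An intransitive $H_i$ with block partition $n=a+(n-a)$, $a\le n/2$, contains a three-cycle permutation if and only if some submultiset of the cycle-length multiset $\{c,d,e\}$ sums to $a$; this gives at most $\lfloor a/2\rfloor+\lfloor (n-a)/2\rfloor\le n/2$ classes of $\mathcal{C}$ contained in $H_i$, and dually each class of $\mathcal{C}$ with distinct parts is contained in exactly three intransitive basic types. An imprimitive $H_i\le\Sym(m)\wr\Sym(n/m)$ is handled by a case analysis of the cycle structure of elements of a wreath product: the condition $\gcd(c,d,e)=1$ forces rigid divisibility constraints on $(c,d,e)$, and summing over the divisors $m$ of $n$ shows that the total number of classes of $\mathcal{C}$ covered by all imprimitive components is a lower-order term of size $O(n)$. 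Finally, for primitive $H_i\ne\Alt(n)$, the classification in \cite{GMPS1,GMPS} restricts $H_i$ (once $n\ge 20$) to a short exceptional list, and each such component contributes $O(1)$ classes of $\mathcal{C}$.

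Combining these coverage bounds by a careful double counting of the incidences $\{(C,H_i):C\in\mathcal{C},\;C\subseteq H_i\}$, and using that every class in $\mathcal{C}$ is covered at least once, yields an inequality that rearranges to the stated bound $\gamma(G)\ge \tfrac{n}{2}\bigl(1-\sqrt{1-8\zeta_{2}(n)}\bigr)-\sqrt{n}$. The square-root shape arises by solving a quadratic inequality obtained from a more refined weighting of the intransitive coverage than the uniform bound $n/2$ used above. The second displayed bound then follows by substituting $\prod_{p}(1-1/p^{2})=6/\pi^{2}$ and absorbing into the error term the gap between $\zeta_{2}(n)$ and its asymptotic lower bound $1/(2\pi^{2})$. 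The chief technical obstacle will be the case analysis for the imprimitive components: it is the coprimality hypothesis $\gcd(c,d,e)=1$ that drives their contribution down to a lower-order term and thereby improves the earlier $a\,\varphi(n)$ lower bound of \cite{JCTA} to the linear-in-$n$ bound of Theorem~\ref{t:summary}.
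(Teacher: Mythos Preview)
Your overall plan is right---choose the coprime $3$-partitions, bound how many each type of maximal subgroup can cover, and use the classification in \cite{GMPS1,GMPS} for the primitive case---and this matches the paper. But the step that actually produces the square root is missing from your outline, and ``a more refined weighting'' together with ``double counting of the incidences $\{(C,H_i)\}$'' does not do it. Plain double counting of incidences gives only $\tfrac{\ell n}{2}\ge p_3(n)'-(\text{lower order})$, hence $\ell\gtrsim 2\zeta_2(n)\,n\ge n/\pi^2$, which is precisely the Mar\'oti bound the theorem is meant to beat. The improvement to $\tfrac{n}{2}\bigl(1-\sqrt{1-8\zeta_2(n)}\bigr)$ comes from an inclusion--exclusion estimate on the cluster sets $\mathfrak{P}_3(n,x_i)$: the paper shows that for $x<y<n/2$ the intersection $\mathfrak{P}_3(n,x)\cap\mathfrak{P}_3(n,y)$ has size exactly $2$, that triple intersections have size at most $1$ and occur for at most $\binom{\ell}{2}$ triples, and that quadruple intersections are empty. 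These feed into
\[
\Bigl|\bigcup_{i=1}^{\ell}\mathfrak{P}_3(n,x_i)\Bigr|\ \le\ \frac{\ell n}{2}\;-\;2\binom{\ell}{2}\;+\;\binom{\ell}{2}\;=\;\frac{\ell(n-\ell+1)}{2},
\]
and it is this quadratic upper bound in $\ell$ (not a reweighted linear one) that, when compared with $p_3(n)'$, yields the square-root expression. Your observation that a generic coprime $3$-partition lies in exactly three intransitive types is true and is morally equivalent to the pairwise-intersection count, but you have not shown how to convert it into the inequality above.

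Two smaller points. First, the imprimitive contribution is controlled in the paper only as $O(n^{3/2})$ (via \cite[Lemma~4.2]{BPS}), not $O(n)$; this is why the error term in the second displayed bound is $\tfrac{\sqrt{17}}{2}n^{3/4}$ rather than $O(\sqrt{n})$, so your ``summing over divisors gives $O(n)$'' needs justification. Second, there is no need to impose $c,d,e\ge 2$; the paper works with all of $\mathfrak{P}_3(n)'$ and uses the exact formula $p_3(n)'=\zeta_2(n)\,n^2$ rather than a M\"obius estimate with an $O(n)$ error.
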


\begin{remark}\label{rem}{\rm
\begin{enumerate}
\item[(a)] Theorem~\ref{t:summary} is proved as part of Corollary~\ref{cor:nr1} which, in turn, follows from Theorem~\ref{gamma-sym-even}, our main result about $\gamma(G)$.

\item[(b)]
Noting that $\pi^{-2}\approx 0.101321$, we see that the lower bound in Theorem~\ref{t:summary} is asymptotically better than the one suggested by Attila Mar\'oti. We comment on this further after the proof of Corollary~\ref{cor:nr1}.

\item[(c)]
The cases not covered by Theorem~\ref{t:summary}, namely $\Sym(n)$ for $n$ odd and $\Alt(n)$ for $n$ even, would require considerably more delicate
tools.  Our approach to proving  Theorem~\ref{t:summary} has involved a rigorous analysis of the numbers and types of maximal subgroups required to contain, between them, conjugates of all permutations having exactly three cycles of lengths, say, $a,b,c$ such that $n=a+b+c$ and  $\gcd(a,b,c)=1$. The group $\Alt(n)$, for $n$ even, contains no such permutations, while in $\Sym(n)$, for $n$ odd, all such permutations  lie in $\Alt(n)$ and so can be covered by the single component $\Alt(n)$ in a basic set for $\Sym(n)$. If this problem were to be tackled by an extension of our approach then a similar, very technical, analysis would be required of subgroups containing permutations with four cycles of lengths $a,b,c,d$ such that $\gcd(a,b,c,d)=1$.
We discuss some of the difficulties of dealing with such permutations with four cycles at the end of Section 7.
\end{enumerate}
}
\end{remark}

\section{Preliminaries and Mar\'oti's construction}\label{symmetric-part}

We denote by $\mathbb{N}$ the set of positive integers. Given $n,k\in\mathbb{N}$,  a $k$-{\em partition} of $n$ is an unordered $k$-tuple $\mathfrak{p}=[x_1,\dots,x_k]$ where, for every
$j\in \{1,\dots,k\}$, $x_j\in\mathbb{N}$ and $n=\sum_{j=1}^{k}x_j.$
The numbers $x_1,\ldots, x_k$ are called the {\em terms} of $\mathfrak{p}$.
The set of $k$-partitions of $n$ is denoted by $\mathfrak{P}_k(n)$. We stress that, by definition, in $\mathfrak{p}\in \mathfrak{P}_k(n)$ repetitions and reordering of terms are allowed.
Note that $\mathfrak{P}_k(n)=\varnothing$ for $k>n.$  A {\em partition} of $n$ is an element of
$\mathfrak{P}(n):=\bigcup_{k\in \mathbb{N}} \mathfrak{P}_k(n)$ and $\mathfrak{P}(n)$  is  called the set of partitions of $n$.

We denote by $\Sym(n)$ the symmetric group of degree  $n\geq 3$ and by $\Alt(n)$ the alternating group of degree $n\geq 4$, both considered in their natural action on $\Omega=\{1,\dots,n\}$. Partitions are very important in dealing with the normal coverings of $\Sym(n)$ and $\Alt(n)$. Indeed,
let $\sigma\in \Sym(n)$ and let $k$ be the number of orbits  of $\langle\sigma\rangle$ on $\Omega$. The
 {\em type}  of  $\sigma$ is the $k$-partition of $n$ given by  the unordered list $\mathfrak{p}(\sigma)=[x_1,...,x_k]\in \mathfrak{P}(n)$ of the sizes $x_i$  of those orbits.
The map
$$\mathfrak{p}:\Sym(n)\rightarrow \mathfrak{P}(n)$$ is
surjective, that is, each partition of $n$ may be viewed as the type of
some permutation. Moreover, $\mathfrak{p}(\Alt(n))\subsetneqq
\mathfrak{P}(n)$.

If $H\leq \Sym(n)$ and $ \mathfrak{p}\in\mathfrak{P}(n)$, we say that $H$ {\em covers} $ \mathfrak{p}$ (or, with abuse of language, that $ \mathfrak{p}$ belongs to $H$)  if $ \mathfrak{p}\in \mathfrak{p}(H),$ that is, $H$ contains a permutation of type $\mathfrak{p}.$

Two permutations are conjugate in  $\Sym(n)$ if and only if they
have the same type.  Thus  $\delta = \{H_1 , \ldots , H_s \}$ is a  basic set for $\Sym(n)$ if and only if every partition of $n$ is covered by some $H_i\in \delta.$
In particular, two permutations of $\Alt(n)$ which are conjugate in $\Alt(n)$ have the same type. Thus if $\delta = \{H_1 , \ldots , H_s \}$ is a  basic set for $\Alt(n)$,  then every partition of $n$ belonging to $\Alt(n)$ is covered by some $H_i\in \delta.$
Recall that, the set of permutations of $\Alt(n)$ with the same type sometimes splits into two conjugacy classes of $\Alt(n)$. This fact will be not relevant for our investigation.

Let $G\in \{\Sym(n),\, \Alt(n) \}$. In order to compute $\gamma(G)$, we can always replace a normal covering of $G$
 by one with the same
number of basic components in which each component is a maximal
subgroup of $G$. For this reason we can assume (and we will) that each
component is a maximal subgroup of $G$, that is to say, in
its action on $\Omega$, it is a maximal intransitive, or
imprimitive, or  primitive subgroup of $G$.

Let $x\in\mathbb{N}$, with  $1\leq x<n/2.$  Consider the subgroup $$P_x:= \Sym(\{1,\ldots,x\})\times \Sym(\{x+1,\ldots,n\})$$ of $\Sym(n)$ fixing the partition $\{\{1,\dots,x\},\{x+1,\dots, n\}\}$ of $\Omega$.  Then the set of maximal subgroups of $G$ which are
intransitive is given, up to conjugacy, by
$$\mathcal{P}:=\{P_x\cap G\mid 1\leq x<n/2\}.$$

Moreover, as is well known, the set of  maximal subgroups of $G$ which are imprimitive is given, up to conjugacy, by the subgroups in the set  $$
\mathcal{W}:=\{\ [\Sym(b)\wr \Sym(n/b)]\cap G \ : 2\leq b\leq n/2,\, b\mid n \}.
$$

Throughout the paper, $\omega(n)$ denotes the number of distinct prime divisors of $n\in \mathbb{N}$, for $n\geq 2$.

We present now the construction by Mar\'oti. Consider $\Sym(n)$, for $n\geq 3$ not a prime, and take all the subgroups in the following sets:
\begin{enumerate}
\item\label{cov1}  $P_k$, where $1\leq k\leq n/3$;
\item\label{cov3}  $P_k$, where $1\leq k<n/2$ and $k$ is coprime to $n$;
\item\label{cov2}  $\Sym(p) \mathrm{wr} \Sym(n/p)$, for the prime divisors $p$ of $n$.
\end{enumerate}
It is easily seen that the above subgroups form a basic set for $\Sym(n)$. Indeed, the subgroups in~\eqref{cov1} cover all the $\ell$-partitions of $n$ with $\ell\geq 3$. The subgroups in ~\eqref{cov2} cover the $2$-partitions with terms divisible by some prime $p$ dividing $n$.  The subgroups in~\eqref{cov3} cover all the $2$-partitions of $n$ with terms coprime to $n$. Thus every partition of $n$ is covered and so these subgroups form a basic set for $\Sym(n)$. Hence,
\begin{equation}\label{eq:a1}
\gamma(\Sym(n)) \leq \frac{n}{3}  + \frac{\varphi(n)}{2}+ \omega(n).
\end{equation}

Now we specialise this construction to particular kinds of natural numbers. Every $n\in \mathbb{N}$ with $n>1$ may be written in the form
\begin{equation}\label{n}
n=p_1^{\alpha_1}\cdots p_r^{\alpha_r}
\end{equation}
 for primes  $p_1<\cdots<p_r$ and positive integers $\alpha_1,\ldots,\alpha_r$, $r\in\mathbb{N}$.
Consider the set $U$ of natural numbers $n$ of this form, with $r\geq 2$ and such that the smallest prime divisor $p_1$ satisfies
\begin{equation}\label{eq:a2}\frac{n}{2}\left(1-\frac{1}{p_1}\right)\left(1-\frac{1}{p_2}\right) + 2 > \frac{n}{3} + \frac{n}{7}.
\end{equation}
Using elementary arguments one may show that \eqref{eq:a2} holds whenever $p_1\geq 43.$
Now for $r\geq 2$ and a prime $p\geq 43$, let $N(r, p)$ be the product of $r$ consecutive primes, the first of which is $p$. Then $N(r, p)\in U$ and
we have $N(r,p)> p^r \geq 43^r>14r=14\,\omega(N(r,p))$, so that
\begin{equation}\label{eq:a3}\omega(N(r,p))  < \frac{N(r,p)}{14}.
\end{equation}
Moreover, since $\displaystyle{\lim_{r\to\infty}\frac{\varphi(N(r,p))}{N(r,p)}= 0},$ there exists  $r^*$  such that, for every $r\geq r^*$,
\begin{equation}\label{eq:a4}
\frac{\varphi(N(r,p))}{2} < \frac{N(r,p)}{14}.
\end{equation}
From~\eqref{eq:a1},~\eqref{eq:a2},~\eqref{eq:a3} and~\eqref{eq:a4}, for $n := N(r,p)=p_1 p_2\dots p_r$, with $r\geq r^*$ and $p_1=p\geq43$, we then have
$$
\gamma(\Sym(n)) < \frac{n}{3} + \frac{n}{14} + \frac{n}{14} <\frac{n}{2}\left(1-\frac{1}{p_1}\right)\left(1-\frac{1}{p_2}\right) + 2.
$$
This contradicts  the conjectured formula \eqref{dag} for infinitely many primes $p$ and integers $r$. Note that this argument involves only odd numbers $n$. It leaves open the question of whether the conjecture \eqref{dag} could hold for even degrees $n$.

\section{Number of partitions and coprime partitions}\label{sec:1}

In this section we collect some formulas about the number  of partitions and coprime partitions and state an easy lemma about a particular kind of $3$-partition which we will meet later.

Let $ p_k(n):=|\mathfrak{P}_k(n)|.$
Obviously  we have $p_1(n)=1$ for all $n\in\mathbb{N}$, $p_k(n)=0$ for all $k>n$, and $p_k(k)=1$ for all $k\in \mathbb{N}$.
Moreover, it is well known that (see ~\cite[page~$81$]{Andrews})
\begin{equation}\label{2-parti}
p_2(n)=\left\lfloor \frac{n}{2}\right\rfloor
\end{equation}
and that
\begin{equation}\label{3-parti}
p_3(n)=\frac{(n-1)(n-2)}{12}+
\frac{1}{2}
\left\lfloor\frac{n-1}{2}\right\rfloor+\varepsilon_n\quad \textrm{with}\quad
\varepsilon_n=
\begin{cases}
0&\textrm{if }3\nmid n,\\
\frac{1}{3}&\textrm{if }3\mid n.
\end{cases}
\end{equation}
 The set of {\it coprime} $k$-partitions of $n$ is defined by
\begin{equation} \label{kcopset}
 \mathfrak{P}_k(n)':=\left\{[x_1,\dots,x_k]\in \mathfrak{P}_k(n) \mid \gcd (x_1,\dots,x_k)=1\right\}.
 \end{equation}
 We also put
\begin{equation} \label{kcopsize}
p_k(n)':=|\mathfrak{P}_k(n)'|.
\end{equation}
Clearly $p_k(n)'=0$ for all $k>n$ and $p_k(k)'=1$ for all $k\in \mathbb{N}$.
Formulas for $p_2(n)'$ and $p_3(n)'$ are given in the following lemma.

 \begin{lemma}\label{l:1}
 Let $n\in\mathbb{ N}.$ Then the following facts hold:
 \begin{itemize}

 \item[i)] $p_2(n)'=\left\lceil\frac{\varphi(n)}{2}\right\rceil$. In particular, for $n			\geq 3$,  $p_2(n)'=\frac{\varphi(n)}{2}$.

\item[ii)] For $n\geq 4$,
 $$p_3(n)'=\frac{n^2}{12} \prod_{\substack{p\mid n\\p\, \mathrm{ prime}}}\left(1-\frac{1}{p^2}\right) .$$
If $n\geq 3$, then $p_3(n)' > \frac{n^2}{2\pi^2}.$
\end{itemize}
\end{lemma}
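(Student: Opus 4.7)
For part (i), any $2$-partition $[x_1,x_2]$ of $n$ satisfies $\gcd(x_1,x_2)=\gcd(x_1,x_1+x_2)=\gcd(x_1,n)$, so the coprime $2$-partitions of $n$ correspond bijectively to orbits of the involution $x\mapsto n-x$ acting on the set $\{x\in\mathbb{N}:1\le x\le n-1,\ \gcd(x,n)=1\}$, which has size $\varphi(n)$. For $n\ge 3$, a fixed point would force $n=2x$ with $\gcd(x,n)=1$, which is impossible; the involution is thus fixed-point-free, giving $p_2(n)'=\varphi(n)/2$. Since $\varphi(n)$ is even for $n\ge 3$, this equals $\lceil\varphi(n)/2\rceil$, and the edge case $n=2$ contributes a single fixed-point orbit, matching $\lceil\varphi(2)/2\rceil=1$.

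For part (ii), the starting point is M\"obius inversion. Partitioning $\mathfrak{P}_3(n)$ according to the common value $d=\gcd(x_1,x_2,x_3)$ and dividing through by $d$ yields a bijection between the part with common divisor $d$ and $\mathfrak{P}_3(n/d)'$; hence
\[
p_3(n)=\sum_{d\mid n}p_3(n/d)'\qquad\text{and so}\qquad p_3(n)'=\sum_{d\mid n}\mu(d)\,p_3(n/d).
\]
Writing $p_3(m)=m^2/12+f(m)$ with $f(m):=p_3(m)-m^2/12$, the quadratic term contributes $\frac{n^2}{12}\sum_{d\mid n}\mu(d)/d^2=\frac{n^2}{12}\prod_{p\mid n}(1-1/p^2)$, which is exactly the claimed formula. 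The remaining, and main technical, point is to verify that the error $E(n):=\sum_{d\mid n}\mu(d)\,f(n/d)$ vanishes for all $n\ge 4$.

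A direct inspection of \eqref{3-parti} shows that $f(m)$ depends only on $m\bmod 6$ and admits the clean additive decomposition
\[
f(m)=\tfrac14\,[m\text{ odd}]-\tfrac13\,[3\nmid m],
\]
where $[\,\cdot\,]$ denotes the Iverson bracket. Substituting this splits $E(n)$ into two sums of the shape $\sum_{d\mid n,\ q\mid d}\mu(d)$, where $q$ is the $2$-part or the $3$-part of $n$. Standard M\"obius arithmetic handles each: only squarefree $d$ contribute, so $q$ itself must be squarefree, and then factoring out $q$ reduces the sum to $\pm\sum_{e\mid n/q}\mu(e)$, which vanishes unless $n/q=1$. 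Consequently both sums vanish unless $n\in\{1,2,3\}$, so $E(n)=0$ for $n\ge 4$ and the exact formula is proved.

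Finally, the lower bound follows from the strict inequality
\[
\prod_{p\mid n}\Bigl(1-\tfrac{1}{p^2}\Bigr)>\prod_{p\text{ prime}}\Bigl(1-\tfrac{1}{p^2}\Bigr)=\frac{1}{\zeta(2)}=\frac{6}{\pi^2},
\]
which is strict because the infinitely many primes not dividing $n$ each contribute a factor strictly less than $1$. This gives $p_3(n)'>n^2/(2\pi^2)$ for $n\ge 4$; the leftover case $n=3$ is handled by direct computation, since $p_3(3)'=1>9/(2\pi^2)$.
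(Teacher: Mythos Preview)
Your proof is correct, but it takes a genuinely different route from the paper. The paper does not prove the formulas for $p_2(n)'$ and $p_3(n)'$ at all: it simply cites Bachraoui~\cite{bach} for both identities and then establishes only the inequality $p_3(n)'>n^2/(2\pi^2)$ via the Euler product for $\zeta(2)^{-1}$, exactly as you do in your final paragraph. By contrast, you supply self-contained derivations: for part~(i) the involution $x\mapsto n-x$ on units modulo $n$, and for part~(ii) a M\"obius inversion combined with the neat observation that the error term $p_3(m)-m^2/12$ equals $\tfrac14[m\text{ odd}]-\tfrac13[3\nmid m]$, which lets you dispatch $E(n)=\sum_{d\mid n}\mu(d)f(n/d)$ by elementary divisor-sum arguments. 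Your approach buys independence from the external reference and makes the paper self-contained at essentially no cost in length; the paper's approach is of course shorter on the page. Both handle the inequality and the case $n=3$ identically.
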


\begin{proof} The formulas for $p_2(n)'$  and $p_3(n)'$ are proved in~\cite[Theorem 1.1 and Theorem 2.2]{bach}.
Thus, we just need to show the inequality in ii).
For $n\geq 4$, we have
\begin{eqnarray*}p_3(n)'&=&\frac{n^2}{12} \prod_{\substack{p\mid n\\p\, \mathrm{ prime}}}\left(1-\frac{1}{p^2}\right)> \frac{n^2}{12} \prod_{p\, \mathrm{ prime}}\left(1-\frac{1}{p^2}\right)\\
&=& \frac{n^2}{12} \zeta(2)^{-1}=\frac{n^2}{2\pi^2},
\end{eqnarray*}
where $\zeta$ denotes the Riemann zeta function (see, for example \cite[Corollary 8.19 and Theorem A.6]{NZM} for the last equality).
For $n=3$, the inequality can be checked directly.
\end{proof}

\begin{lemma}\label{family}
Let $q$ be a prime power, $d, d_1, d_2 \in \mathbb{N}$ be such that $d=d_1+d_2$ and $\gcd(d_1,d_2)=1$. Then
$$
\left[\frac{q^{d_1}-1}{q-1},\,\frac{q^{d_2}-1}{q-1},\,\frac{(q^{d_1}-1)(q^{d_2}-1)}{q-1}\right]\in \mathfrak{P}_3\left(\frac{q^d-1}{q-1}\right)'.
$$
\end{lemma}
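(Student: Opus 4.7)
The plan is to verify the two defining conditions for membership in $\mathfrak{P}_3\left(\frac{q^d-1}{q-1}\right)'$: that the three listed terms are positive integers whose sum equals $\frac{q^d-1}{q-1}$, and that their greatest common divisor equals $1$.

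For the sum condition, write $a=\frac{q^{d_1}-1}{q-1}$, $b=\frac{q^{d_2}-1}{q-1}$ and $c=\frac{(q^{d_1}-1)(q^{d_2}-1)}{q-1}$, all of which are positive integers (the third because $q-1$ divides $q^{d_1}-1$). I would clear denominators and compute
$$(q-1)(a+b+c)=(q^{d_1}-1)+(q^{d_2}-1)+(q^{d_1}-1)(q^{d_2}-1)=q^{d_1+d_2}-1=q^d-1,$$
which gives $a+b+c=\frac{q^d-1}{q-1}$ after using $d=d_1+d_2$.

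For the coprimality condition, I would invoke the standard identity $\gcd(q^{d_1}-1,\,q^{d_2}-1)=q^{\gcd(d_1,d_2)}-1$. Since $\gcd(d_1,d_2)=1$, this gcd equals $q-1$. Dividing out by $q-1$ (which divides both $q^{d_1}-1$ and $q^{d_2}-1$), I conclude that $\gcd(a,b)=1$, hence certainly $\gcd(a,b,c)=1$.

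There is no real obstacle here; the whole argument is two short computations. The only mildly nontrivial ingredient is the identity $\gcd(q^{d_1}-1,q^{d_2}-1)=q^{\gcd(d_1,d_2)}-1$, which is classical (provable by the Euclidean algorithm applied to the exponents).
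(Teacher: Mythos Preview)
Your proof is correct and follows essentially the same line as the paper: both verify the sum by direct expansion and reduce the coprimality to the classical identity $\gcd(q^{d_1}-1,q^{d_2}-1)=q^{\gcd(d_1,d_2)}-1=q-1$. Your final step is in fact a bit cleaner than the paper's: you simply use that dividing two integers by their gcd yields coprime quotients, whereas the paper, after obtaining $a\mid q-1$ for a common divisor $a$, invokes the additional congruence $\frac{q^{d_i}-1}{q-1}\equiv d_i\pmod{q-1}$ (so that $\gcd(\frac{q^{d_i}-1}{q-1},q-1)\mid d_i$) to conclude $a\mid\gcd(d_1,d_2)=1$. Both arguments are valid; yours avoids this extra ingredient.
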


\begin{proof}
Let $a\in \mathbb{N}$ be a divisor of all the terms in $\mathfrak{p}=[\frac{q^{d_1}-1}{q-1},\,\frac{q^{d_2}-1}{q-1},\,\frac{(q^{d_1}-1)(q^{d_2}-1)}{q-1}]$. Then $a\mid \gcd(q^{d_1}-1,q^{d_2}-1)=q^{\gcd(d_1,d_2)}-1=q-1$ and, since $\gcd(\frac{q^{d_i}-1}{q-1},q-1)\mid \gcd(d_i,q-1)$ for $i=1,2$, we get $a\mid \gcd(d_1,d_2)=1.$
\end{proof}

 \section{Partitions with clusters}\label{clusters}

Let $n,k\in \mathbb{N}$ with $n\geq 2$, and let $x\in\mathbb{N}$ with  $1\leq x\leq n-1.$ We say that a $k$-partition $\mathfrak{p}=[x_1,\ldots,x_k]\in \mathfrak{P}_k(n)$ has an {\it $x$-cluster} if the sum of some of its terms is equal to $x$, that is, $x=x_{i_1}+x_{i_2}+\cdots+x_{i_\ell}$ for some $\ell\in\mathbb{N}$ and  $1\le i_1<i_2<\cdots<i_\ell\le k$. Obviously a $k$-partition of $n$ has an $x$-cluster if and only if it has an $(n-x)$-cluster. We are interested in the set
\begin{equation}\label{cluster}
\mathfrak{P}_k(n,x):=\{\mathfrak{p}\in \mathfrak{P}_k(n) \mid\mathfrak{p} \hbox{ has an } x\hbox{-cluster}\}.
\end{equation}
We call $\mathfrak{P}_k(n,x)$ the set of $k$-partitions of $n$ with an $x$-cluster.

We set $p_k(n,x):=|\mathfrak{P}_k(n,x)|$. Obviously, $p_1(n,x)=0$ and $p_2(n,x)=1$. Thus  we are interested in $p_k(n,x)$ only  for $k\geq 3.$ Moreover, since $\mathfrak{P}_k(n,x)=\mathfrak{P}_k(n,n-x)$, we have $p_k(n,x)=p_k(n,n-x)$; therefore for computing $p_k(n,x)$ we may restrict to the case $1\leq x\leq n/2.$
For the purposes of this paper, it will be sufficient to deal with the more restrictive condition $1\leq x< n/2.$
Indeed, when $1\leq x< n/2,$ $\mathfrak{P}_k(n,x)$ coincides with the set of $k$-partitions of $n$ covered by $P_x$. This fact justifies our interest in the sets $\mathfrak{P}_k(n,x).$

\begin{lemma}\label{l:2}  Let $n, x,k\in\mathbb{N}$, with $n\geq 2$ and  $1\leq x\leq n-1$. Then the following facts hold:
\begin{itemize}
\item[i)] \begin{equation}\label{a}
p_{k}(n,x)\leq \sum_{i=1}^{k-1}p_i(x)p_{k-i}(n-x),
\end{equation}
with equality when $k\in\{1,2\}.$
\item[ii)] For $k=3,$ equality  in \eqref{a} holds if and only if $n$ is odd, or $n$ is even and $x\neq\frac{n}{2}
  $, or $(n,x)=(2,1).$ Moreover,
\begin{equation}\label{p3x}
p_3(n,x)=\left\{ \begin{array}{ll}
\left\lfloor \frac{n}{4}\right\rfloor  & {\text{\rm if}} \  n\geq 4\  {\text{\rm is even and }} x=n/2,\\
& \\
\left\lfloor \frac{n-x}{2}\right\rfloor + \left\lfloor \frac{x}{2}\right\rfloor &  {\text{\rm otherwise}.}
\end{array}
\right.
\end{equation}
In particular, $p_3(n,x)\leq \frac{n}{2}.$

 \end{itemize}
\end{lemma}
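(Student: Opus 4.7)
For part (i), the idea is to set up the natural surjection
\[
\Phi\colon\ \bigsqcup_{i=1}^{k-1}\mathfrak{P}_i(x)\times\mathfrak{P}_{k-i}(n-x)\ \longrightarrow\ \mathfrak{P}_k(n,x),\qquad \Phi(\mathfrak{q},\mathfrak{r}):=\mathfrak{q}\cup\mathfrak{r},
\]
where $\mathfrak{q}\cup\mathfrak{r}$ denotes multiset union. Having an $x$-cluster is precisely the existence of a sub-multiset $\mathfrak{q}$ of $i$ terms summing to $x$ (for some $1\le i\le k-1$) with complement $\mathfrak{r}$ of $k-i$ terms summing to $n-x$; so $\Phi$ is surjective and \eqref{a} follows by cardinality. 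For $k=1$ both sides vanish (the only $1$-partition $[n]$ has no $x$-cluster as $1\le x<n$, and the sum is empty); for $k=2$ both sides equal $1$, since $[x,n-x]$ is the unique partition with an $x$-cluster and is the unique image of $([x],[n-x])$.

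For part (ii), I would analyse when $\Phi$ is injective at $k=3$. Because $\mathfrak{P}_1(\cdot)$ is a singleton, removing the unique $1$-partition from a union $\mathfrak{q}\cup\mathfrak{r}$ recovers the other factor, so $\Phi$ is already injective on each of the summands $\mathfrak{P}_1(x)\times\mathfrak{P}_2(n-x)$ and $\mathfrak{P}_2(x)\times\mathfrak{P}_1(n-x)$ separately. Consequently, all overcounting comes from a single $\mathfrak{p}=[a,b,c]\in\mathfrak{P}_3(n,x)$ admitting both a $1$-cluster and a $2$-cluster equal to $x$. Assuming $c=x$, the $2$-cluster must be $a+b=x$ (the alternatives $a+c=x$ and $b+c=x$ would force $b=0$ or $a=0$), which yields $n=2x$; conversely, when $n=2x$, every $[a,b,n/2]$ with $a+b=n/2$ is counted twice. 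Thus equality in \eqref{a} holds precisely when either $x\ne n/2$, or $n=2x$ but $\mathfrak{P}_3(n,n/2)=\varnothing$, which (since $a,b\ge 1$) is the single exception $(n,x)=(2,1)$.

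To derive \eqref{p3x}, I would then plug in: in the equality cases, $p_3(n,x)=p_1(x)p_2(n-x)+p_2(x)p_1(n-x)=\lfloor(n-x)/2\rfloor+\lfloor x/2\rfloor$ via \eqref{2-parti}; in the exceptional case ($n\ge 4$ even, $x=n/2$), the explicit description of $\mathfrak{P}_3(n,n/2)$ as $\{[a,b,n/2]:a+b=n/2\}$ gives $p_3(n,n/2)=p_2(n/2)=\lfloor n/4\rfloor$. The uniform bound $p_3(n,x)\le n/2$ then follows at once in both cases. The only delicate step---the main obstacle, such as it is---is the case analysis that identifies $n=2x$ as the \emph{sole} source of non-injectivity: one must rule out collisions inside each summand and exhaust the (few) ways a $3$-partition can simultaneously carry a term equal to $x$ and a pair of distinct terms summing to $x$.
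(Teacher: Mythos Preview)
Your proposal is correct and follows essentially the same route as the paper. The paper also proves \eqref{a} via the surjection $(\mathfrak{q},\mathfrak{r})\mapsto \mathfrak{q}\cup\mathfrak{r}$ (restricted to each summand $i$), and for $k=3$ also reduces the equality question to whether a $3$-partition can simultaneously have a term equal to $x$ and a term equal to $n-x$ (equivalently, your ``$1$-cluster and $2$-cluster both equal to $x$''), forcing $n=2x$; the computation of $p_3(n,n/2)$ as $p_2(n/2)=\lfloor n/4\rfloor$ is identical. The only cosmetic difference is packaging: you use one global map $\Phi$ on the disjoint union and analyse its injectivity, whereas the paper introduces the strata $\mathfrak{P}_k(n,x)_i$ and checks disjointness of $\mathfrak{P}_3(n,x)_1$ and $\mathfrak{P}_3(n,n-x)_1$; your explicit verification that $\Phi$ is injective on each summand is a point the paper leaves implicit.
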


\begin{proof}

i) For each $i\in\mathbb{N}$ with $i\leq k-1$, consider the set
$$
\mathfrak{P}_k(n,x)_i:= \{\mathfrak{p}\in \mathfrak{P}_k(n) \mid \hbox{ the sum of } i  \hbox{ terms of } \mathfrak{p}  \hbox{ equals } x\}.
$$
If a $k$-partition is covered by $ \mathfrak{P}_k(n,x),$ then, for some $i\in \{1,\ldots,k-1\}$, the sum of $i$ of its terms must be equal to $x$. Therefore, we have
$$
\mathfrak{P}_k(n,x)= \bigcup_{i=1}^{k-1}\mathfrak{P}_k(n,x)_i
$$
and thus $p_{k}(n,x)\leq \sum_{i=1}^{k-1}|\mathfrak{P}_k(n,x)_i|.$  It remains to show that, for each $i\in\{1,\ldots,k-1\}$,
\begin{equation}\label{T}
|\mathfrak{P}_k(n,x)_i|\leq p_i(x)p_{k-i}(n-x).
\end{equation}
Consider the map
$$
f:\mathfrak{P}_i(x)\times \mathfrak{P}_{k-i}(n-x)\rightarrow \mathfrak{P}_k(n,x)_i
$$
which sends $([x_1,\dots, x_i],[x_1',\dots,x_{k-i}'])\in \mathfrak{P}_i(x)\times \mathfrak{P}_{k-i}(n-x)$ to the
$k$-partition $[x_1,\dots, x_i, x_1',\dots,x_{k-i}']$.
Then $f$ is surjective. Indeed, let $\mathfrak{q}\in \mathfrak{P}_k(n,x)_i$ and label its terms by $x_1,\dots, x_k$ so that $\sum_{j=1}^ix_j=x$. Thus $\mathfrak{p}:=[x_1,\dots, x_i]\in \mathfrak{P}_i(x)$, $\mathfrak{p}':=[x_{i+1},\dots, x_k]\in \mathfrak{P}_i(n-x)$ and $f(\mathfrak{p},\mathfrak{p'})=\mathfrak{q}.$
Then,~\eqref{T} follows immediately.
Note now that $p_{1}(n,x)=0$ and hence, when $k=1$, the inequality in~\eqref{a} is an equality, because the sum is taken over an empty set of indices. When $k=2$, we have $p_{2}(n,x)=|\{[x,n-x]\}|=1$ and  also $\sum_{i=1}^{1}p_i(x)p_{k-i}(n-x)=p_1(x)p_{1}(n-x)=1.$

ii) For $k=3$, we have
 \begin{equation}\label{U}
  \mathfrak{P}_3(n,x)= \mathfrak{P}_3(n,x)_1\cup  \mathfrak{P}_3(n,x)_2=\mathfrak{P}_3(n,x)_1\cup  \mathfrak{P}_3(n,n-x)_1.
  \end{equation}
Suppose that either $n$ is odd, or $n$ is even with $x\neq\frac{n}{2}.$ Then $x\neq n-x$ and, since $x$ and
$n-x$ cannot appear together in a $3$-partition of $n$, we have  $\mathfrak{P}_3(n,x)_1\cap  \mathfrak{P}_3(n,n-x)_1
=\varnothing.$ Thus
$$
p_{3}(n,x)=|\mathfrak{P}_3(n,x)_1|+|\mathfrak{P}_3(n,n-x)_1|=p_2(n-x)+p_2(x),
$$
and equality holds in \eqref{a}. Also, if $(n,x)=(2,1)$, then in \eqref{a} both members are equal to $0$ and equality holds.
In all  these cases, from \eqref{2-parti}, we  get
$$
p_{3}(n,x)=\left\lfloor \frac{n-x}{2}\right\rfloor + \left\lfloor \frac{x}{2}\right\rfloor,
$$
and hence~\eqref{p3x} holds.
Finally, suppose that  $n$ is even, $n\geq 4$, and $x=\frac{n}{2}.$
We need to show that $p_3(n,n/2) = \lfloor n/4\rfloor$, and that the inequality in \eqref{a} is strict.
Since trivially $\mathfrak{P}_3(n,n/2)_1=  \mathfrak{P}_3(n,n-n/2)_1$, by \eqref{U} and \eqref{2-parti}, we obtain $p_3(n,n/2)=p_2(n/2)=\left\lfloor \frac{n}{4}\right\rfloor.$ Moreover
$$
\sum_{i=1}^{2}p_i(n/2)p_{3-i}(n/2)=2p_2(n/2)\neq p_2(n/2)
$$
because, since $n/2\geq 2$, we have $p_2(n/2)\neq 0$. The final assertion of ii), namely  $p_3(n,x)\leq \frac{n}{2}$,
follows immediately from~\eqref{p3x}.
%
\end{proof}

\section{Intersections of $3$-partitions with clusters}\label{intersections}
\begin{lemma}\label{l:3}
Let $1\leq x<y<n/2$. Then
$$
\mathfrak{P}_3(n,x)\cap \mathfrak{P}_3(n,y)=\{[x,y,n-x-y],[x,y-x,n-y]\}
$$
has size $2$.
\end{lemma}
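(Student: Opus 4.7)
The plan is to reduce the condition ``$\mathfrak{p}\in\mathfrak{P}_3(n,x)$'' to a simple statement about the individual terms of $\mathfrak{p}$. If $\mathfrak{p}=[a_1,a_2,a_3]\in\mathfrak{P}_3(n)$, the possible nontrivial subset sums of its terms are $a_1,a_2,a_3,a_1+a_2,a_1+a_3,a_2+a_3$, and since $a_1+a_2+a_3=n$ the three pair sums equal $n-a_3,n-a_2,n-a_1$. Hence $\mathfrak{p}$ has an $x$-cluster if and only if $a_i\in\{x,n-x\}$ for some $i$, and similarly for $y$. This is the one observation that drives the argument.

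Next I would record that under the hypothesis $1\le x<y<n/2$, the four numbers $x,y,n-y,n-x$ are pairwise distinct (and positive), so $\{x,n-x\}\cap\{y,n-y\}=\varnothing$. Therefore a partition $\mathfrak{p}\in\mathfrak{P}_3(n,x)\cap \mathfrak{P}_3(n,y)$ must contain two \emph{different} terms $a_i\in\{x,n-x\}$ and $a_j\in\{y,n-y\}$, with the third term determined by the sum $n$. This gives four cases to analyse.

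The case analysis is then immediate: if the two distinguished terms are $x,y$, the third equals $n-x-y$, and $n-x-y\ge 1$ since $x+y<n$, yielding $[x,y,n-x-y]$; if they are $x,n-y$, the third equals $y-x\ge 1$, yielding $[x,y-x,n-y]$; if they are $n-x,y$ the third would be $x-y<0$, impossible; and if they are $n-x,n-y$ the third would be $x+y-n<0$, again impossible. So the intersection is contained in $\{[x,y,n-x-y],[x,y-x,n-y]\}$, and conversely a direct check (using the observation above) shows both partitions belong to it.

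The only residual point is to verify the two partitions are distinct. Since both contain $x$, equality of the multisets $[x,y,n-x-y]=[x,y-x,n-y]$ would force $\{y,n-x-y\}=\{y-x,n-y\}$ as multisets, i.e.\ either $y=y-x$ (forcing $x=0$) or $y=n-y$ (forcing $y=n/2$), and both are excluded by the hypothesis $1\le x<y<n/2$. I do not expect any real obstacle here; the argument is entirely a bookkeeping exercise, and the only thing one must be careful about is keeping the strict inequalities $x<y<n/2$ live throughout the case analysis so that positivity of the third term and distinctness of the two partitions are both guaranteed.
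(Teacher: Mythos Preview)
Your proof is correct and follows essentially the same approach as the paper: both reduce membership in $\mathfrak{P}_3(n,x)$ to the condition that some term lies in $\{x,n-x\}$, then use the strict inequalities $x<y<n/2$ to eliminate the cases where $n-x$ would be a term and to verify distinctness of the two surviving partitions. The only cosmetic difference is that the paper first argues that $n-x$ cannot be a term (since the remaining two terms would then sum to $x$ and hence both be smaller than $y$ and $n-y$), whereas you carry out all four cases and discard two of them by observing the forced third term would be negative; these are the same observation phrased differently.
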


\begin{proof}
Clearly $\{[x,y,n-x-y],[x,y-x,n-y]\}\subseteq \mathfrak{P}_3(n,x)\cap \mathfrak{P}_3(n,y)$.
On the other hand, if $\mathfrak{p}\in \mathfrak{P}_3(n,x)\cap \mathfrak{P}_3(n,y)$, then
either $x$ or $n-x$ is a term of $\mathfrak{p}$ and either $y$ or $n-y$ is a term of $\mathfrak{p}$.
However, $n-x > n-y > n/2 > y > x$, so if $n-x$ were a term of $\mathfrak{p}$ then the remaining
two terms would sum to $x$ and hence neither of them could be $y$ or $n-y$. Thus $x$ must be a
term of $\mathfrak{p}$, and so $\mathfrak{p}\in \{[x,y,n-x-y],[x,y-x,n-y]\}$. Finally, if  $[x,y,n-x-y]
=[x,y-x,n-y]$, then either $y=y-x$ or $y=n-y$,  contradicting $x\ge 1$ or $y< n/2$, respectively.
\end{proof}

\begin{lemma}\label{l:4}
Let $1\leq x<y<z<n/2$. Then
\[\mathfrak{P}_3(n,x)\cap \mathfrak{P}_3(n,y)\cap \mathfrak{P}_3(n,z)=
\begin{cases}
\varnothing&\textrm{if }z\notin\{x+y, n-x-y\},\\
\{[x,y,n-x-y]\}&\textrm{if }z\in \{x+y, n-x-y\}.
\end{cases}
\]
\end{lemma}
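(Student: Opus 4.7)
The plan is to reduce this three-way intersection to the two-way intersection already determined by Lemma~\ref{l:3}, and then simply check, for each of the two candidate $3$-partitions listed there, whether it has a $z$-cluster under the hypothesis $1\le x<y<z<n/2$.

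By Lemma~\ref{l:3} applied to the pair $(x,y)$, we have
$$
\mathfrak{P}_3(n,x)\cap \mathfrak{P}_3(n,y)=\{\,\mathfrak{p}_1,\mathfrak{p}_2\,\},\qquad \mathfrak{p}_1:=[x,y,n-x-y],\quad \mathfrak{p}_2:=[x,y-x,n-y].
$$
So the triple intersection equals $\{\mathfrak{p}_i : \mathfrak{p}_i\in \mathfrak{P}_3(n,z),\ i=1,2\}$, and it suffices to enumerate, for each $\mathfrak{p}_i$, the values $v$ for which $\mathfrak{p}_i$ has a $v$-cluster, and then intersect with the constraint $x<y<z<n/2$.

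For $\mathfrak{p}_1=[x,y,n-x-y]$, the non-empty proper sub-sums of its terms are exactly
$$
\{\,x,\ y,\ n-x-y,\ x+y,\ n-y,\ n-x\,\},
$$
and these are precisely the $v$ for which $\mathfrak{p}_1$ has a $v$-cluster. Since $z>y>x$, we have $z\notin\{x,y\}$; since $z<n/2$ while $y<n/2$ (so $n-y>n/2$) and $x<n/2$ (so $n-x>n/2$), we have $z\notin\{n-y,n-x\}$. Hence $\mathfrak{p}_1\in \mathfrak{P}_3(n,z)$ if and only if $z\in\{x+y,\,n-x-y\}$. Similarly, for $\mathfrak{p}_2=[x,y-x,n-y]$, the corresponding set of sub-sums is
$$
\{\,x,\ y-x,\ n-y,\ y,\ n-y+x,\ n-x\,\},
$$
and under $1\le x<y<z<n/2$ we get $z>y>y-x\ge 1$, $z\neq x,y$, and $z<n/2<n-y<n-y+x$ and $z<n/2<n-x$; so none of these values can equal $z$, i.e.\ $\mathfrak{p}_2\notin\mathfrak{P}_3(n,z)$.

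Combining these two observations gives the claim: the intersection is empty unless $z\in\{x+y,n-x-y\}$, in which case it equals $\{[x,y,n-x-y]\}$. The only thing to double-check is that $[x,y,n-x-y]$ is a bona fide element of $\mathfrak{P}_3(n)$ when $z\in\{x+y,n-x-y\}$, i.e.\ that $n-x-y\ge 1$; but if $z=x+y$ then $x+y<n/2<n$, and if $z=n-x-y$ then $n-x-y=z\ge 1$, so this is automatic. I do not anticipate any serious obstacle: the whole argument is a finite case analysis of six sub-sums per candidate partition, and the strict inequalities $x<y<z<n/2$ immediately rule out all but the two exceptional values of $z$.
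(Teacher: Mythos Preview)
Your proof is correct and follows essentially the same approach as the paper: reduce to the two-element set furnished by Lemma~\ref{l:3} and then eliminate by inequalities. The only difference in execution is that you apply Lemma~\ref{l:3} once (to the pair $(x,y)$) and then enumerate all sub-sums of each candidate partition to test for a $z$-cluster, whereas the paper applies Lemma~\ref{l:3} twice (to $(x,y)$ and to $(y,z)$) and intersects the two resulting pairs of partitions, yielding four equalities to check; your route is marginally more direct but the content is the same.
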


\begin{proof}
If $z\in\{x+y, n-x-y\}$, then $[x,y,n-x-y]\in \mathfrak{P}_3(n,x)\cap \mathfrak{P}_3(n,y)\cap \mathfrak{P}_3(n,z)$. On the other hand, assume that there exists $\mathfrak{p}\in \mathfrak{P}_3(n,x)\cap \mathfrak{P}_3(n,y)\cap \mathfrak{P}_3(n,z)$.Then, by Lemma \ref{l:3}, we have
\begin{align}\label{eq:neq}\nonumber
\mathfrak{p}\in &(\mathfrak{P}_3(n,x)\cap \mathfrak{P}_3(n,y))\cap(\mathfrak{P}_3(n,y)\cap \mathfrak{P}_3(n,z))\\
&=\{[x,y,n-x-y],[x,y-x,n-y]\}\cap \{[y,z,n-y-z],[y,z-y,n-z]\}.
\end{align}
Therefore, we have four cases to consider:
\begin{enumerate}
\item\label{hyp1} $\mathfrak{p}=[x,y,n-x-y]=[y,z,n-y-z]$,
\item\label{hyp2} $\mathfrak{p}=[x,y,n-x-y]=[y,z-y,n-z]$,
\item\label{hyp3} $\mathfrak{p}=[x,y-x,n-y]=[y,z,n-y-z]$,
\item\label{hyp4} $\mathfrak{p}=[x,y-x,n-y]=[y,z-y,n-z]$.
\end{enumerate}
If~\eqref{hyp1} holds true, since $x\ne z$, we have $x=n-y-z$ and hence $z=n-x-y$. Moreover, the intersection in~\eqref{eq:neq}
reduces to $\{[x,y,n-x-y]\}$. If~\eqref{hyp2} holds true, since $x<n/2< n-z$, we have $x=z-y$ and hence $z=x+y$. Moreover, the
intersection in~\eqref{eq:neq} reduces again to $\{[x,y,n-x-y]\}$.
We easily see that the cases~\eqref{hyp3} and~\eqref{hyp4} are not possible. In~\eqref{hyp3}, the term $n-y$ does not appear in $[y, z, n-y-z]$
since $n-y > n/2 > z > y$ and $n-y > n-y-z$; in~\eqref{hyp4},  the term $n-y$ does not appear in $[y, z-y, n-z]$
since $n-y > n/2 > z > z-y$, $n-y > n/2 > y$, and $n-y > n-z$.
\end{proof}

\begin{lemma}\label{empty}
Let $1\leq x<y<z<t<n/2$. Then
\[\mathfrak{P}_3(n,x)\cap \mathfrak{P}_3(n,y)\cap \mathfrak{P}_3(n,z)\cap \mathfrak{P}_3(n,t)=\varnothing.
\]

\end{lemma}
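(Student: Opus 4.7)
The plan is to reduce the four-term claim to an immediate contradiction via Lemma \ref{l:4}. The two triples $(x,y,z)$ and $(x,y,t)$ both satisfy the hypotheses of Lemma \ref{l:4} (in each case the three indices are strictly increasing and all lie in $[1, n/2)$). If the four-way intersection were nonempty, then in particular both
$\mathfrak{P}_3(n,x)\cap \mathfrak{P}_3(n,y)\cap \mathfrak{P}_3(n,z)$ and
$\mathfrak{P}_3(n,x)\cap \mathfrak{P}_3(n,y)\cap \mathfrak{P}_3(n,t)$
would be nonempty, so Lemma \ref{l:4} would force
\[
z\in\{x+y,\ n-x-y\}\qquad\text{and}\qquad t\in\{x+y,\ n-x-y\}.
\]

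Since $z<t$ these two values are distinct, so $\{z,t\}=\{x+y,\ n-x-y\}$ as a set of two distinct elements. In particular $x+y\ne n-x-y$, that is $x+y\ne n/2$. Since $(x+y)+(n-x-y)=n$ and the two summands are unequal, exactly one of them is strictly less than $n/2$ and the other is strictly greater than $n/2$. Consequently at least one of $z,t$ exceeds $n/2$, contradicting the hypothesis $z<t<n/2$.

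The main (essentially only) obstacle is bookkeeping: one must make sure that Lemma \ref{l:4} really does apply to both triples, which is why I chose the pair $(x,y,z)$ and $(x,y,t)$ rather than, say, $(x,y,z)$ and $(y,z,t)$; with the chosen pair the two smallest indices coincide, so the unique candidate partition $[x,y,n-x-y]$ produced by the lemma is the same in both instances, and the constraint on the third index collapses cleanly onto the two-element set $\{x+y,n-x-y\}$, from which the size contradiction with $n/2$ is immediate.
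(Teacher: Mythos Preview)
Your proof is correct and takes essentially the same approach as the paper: both argue by contradiction via Lemma~\ref{l:4}. The paper invokes Lemma~\ref{l:4} to identify the common partition as $\mathfrak{p}=[x,y,n-x-y]=[z,t,n-z-t]$ and obtains a contradiction from the distinctness of $x,y,z,t$, whereas you apply Lemma~\ref{l:4} to the triples $(x,y,z)$ and $(x,y,t)$ and use only the membership condition $z,t\in\{x+y,\,n-x-y\}$ together with the bound $z,t<n/2$---a minor but perfectly clean variation.
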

\begin{proof}
Assume, by contradiction , that $\mathfrak{P}_3(n,x)\cap \mathfrak{P}_3(n,y)\cap \mathfrak{P}_3(n,z)\cap \mathfrak{P}_3(n,t)\neq\varnothing.$ Then, by Lemma \ref{l:4},  the intersection we are dealing with is just $\{\mathfrak{p}\}$ with $\mathfrak{p}=[x,y,n-x-y]=[z,t,n-z-t].$ Since $x,y,z,t$ are distinct, we obtain a contradiction.
\end{proof}

\section{Coprime $3$-partitions covered by imprimitive or intransitive subgroups}\label{im-int}

We begin by recalling a particular case of~\cite[Lemma 4.2]{BPS}.

\begin{lemma}\label{l:7}
Let $n\in\mathbb{N},$ with $n\geq 3$. Then the number of coprime $3$-partitions of $n$ covered by some imprimitive subgroup of $\Sym(n)$ is at most $2n^{3/2}.$
\end{lemma}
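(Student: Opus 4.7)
The plan is to exploit the classification of maximal imprimitive subgroups of $\Sym(n)$: up to conjugacy these are the subgroups $H_b := \Sym(b)\wr\Sym(n/b)$ for divisors $b$ of $n$ with $2\leq b\leq n/2$. Since a subgroup and all of its conjugates cover the same set of types, and since every imprimitive subgroup lies in a maximal imprimitive, it suffices to bound the number of coprime $3$-partitions covered by the $H_b$ and sum over $b$.

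The main step is a cycle-structure analysis inside the wreath product. Any $\sigma\in H_b$ preserves a block system $\mathcal{B}$ with $n/b$ blocks of size $b$, and every cycle of $\sigma$ is contained in the union of the blocks in a single $\sigma$-orbit on $\mathcal{B}$; moreover, if such a block-orbit has size $\ell$, each cycle of $\sigma$ on its support has length a multiple of $\ell$, and these lengths sum to $\ell b$. Consequently, for a $3$-partition $[x_1,x_2,x_3]$ to be covered by $H_b$, the three cycles must distribute into $\sigma$-orbits on blocks in exactly one of the following patterns: either (a) all three cycles lie in a single block-orbit, of size $n/b$, giving $(n/b)\mid \gcd(x_1,x_2,x_3)$; or (b) two cycles lie in a block-orbit of size $\ell$ and the third fills the complementary block-orbit of size $n/b-\ell$, giving $\ell\mid \gcd(x_1,x_2)$, $x_1+x_2=\ell b$, and $b\mid x_3$; or (c) each cycle occupies its own block-orbit, so that $b\mid \gcd(x_1,x_2,x_3)$.

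For a coprime $3$-partition, cases (a) and (c) force $n/b=1$ and $b=1$ respectively, both contradicting $2\leq b\leq n/2$; so only (b) can occur. In (b) the partition has the shape $[\ell y_1,\ell y_2,\ell'b]$ with $\ell\geq 1$, $\ell':=n/b-\ell\geq 1$, $y_i\geq 1$, and $y_1+y_2=b$, and conversely every such shape is realized by some $\sigma\in H_b$ (choose a $2$-cycle structure $[y_1,y_2]$ of $\Sym(b)$ on each block of an $\ell$-orbit, together with a full $(n/b-\ell)$-cycle on the remaining blocks). For fixed $b$ the number of such unordered triples is therefore at most $(n/b-1)\lfloor b/2\rfloor\leq n/2$.

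Summing over the at most $\tau(n)-2$ relevant divisors of $n$ and using the elementary bound $\tau(n)\leq 2\sqrt{n}$ (proved by pairing each divisor $d\leq\sqrt{n}$ with $n/d\geq\sqrt{n}$), the total count of coprime $3$-partitions of $n$ covered by some imprimitive subgroup is at most $(\tau(n)-2)\cdot n/2 \leq \sqrt{n}\cdot n = n^{3/2}\leq 2n^{3/2}$, as required. The main obstacle is the cycle-structure step: one must correctly catalog the possible ways the three cycles distribute among $\sigma$-orbits on blocks and extract the divisibility conditions that eliminate cases (a) and (c) under the coprimality hypothesis. The remainder is a routine divisor-counting argument.
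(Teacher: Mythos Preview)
Your argument is correct. The paper itself does not prove this lemma but simply invokes \cite[Lemma~4.2]{BPS}; your proposal supplies a complete, self-contained proof along what is the natural line of attack (and almost certainly the one in the cited reference): reduce to the maximal imprimitive subgroups $H_b=\Sym(b)\wr\Sym(n/b)$, analyse how the three cycles of a permutation of type $[x_1,x_2,x_3]$ distribute among the orbits of the induced action on blocks, use coprimality to rule out the cases where all three or all one lie in their own block-orbit, and then count the surviving shapes $[\ell y_1,\ell y_2,(n/b-\ell)b]$.

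Two minor remarks. First, in case~(b) the fact that there are \emph{exactly two} block-orbits is implicit in your trichotomy (since each block-orbit supports at least one cycle and there are only three cycles, the number of block-orbits is $1$, $2$, or $3$); it may be worth saying this explicitly. Second, your count actually yields $(\tau(n)-2)\cdot n/2\le n^{3/2}$, which is stronger than the stated $2n^{3/2}$; this is harmless for the application. The converse realisability claim you sketch is not needed for the upper bound, but it is correct: take the top permutation to be a product of an $\ell$-cycle and an $\ell'$-cycle on the blocks, with base product of type $[y_1,y_2]$ over the $\ell$-orbit and a $b$-cycle over the $\ell'$-orbit.
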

\begin{lemma}\label{l:new} Let $n\in\mathbb{N},$ with $n\geq 3$ and $\ell\in \mathbb{N}$. Then the number of $3$-partitions of $n$ covered by $\ell$ intransitive maximal subgroups is at most $\frac{\ell}{2}(n-\ell+1)$.
\end{lemma}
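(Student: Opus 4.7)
The plan is to reduce to bounding $\bigl|\bigcup_{i=1}^\ell\mathfrak{P}_3(n,x_i)\bigr|$ and then telescope via a smallest-index decomposition. Since conjugate subgroups cover the same cycle types, one may assume without loss that the $\ell$ intransitive maximal subgroups are representatives of $\ell$ distinct conjugacy classes, namely (by the description recalled in Section~\ref{symmetric-part}) of the form $P_{x_1}\cap G,\dots,P_{x_\ell}\cap G$ for integers $1\le x_1<x_2<\cdots<x_\ell<n/2$. The $3$-partitions they cover form a subset of $\bigcup_{i=1}^\ell\mathfrak{P}_3(n,x_i)$, so it suffices to bound this union above by $\ell(n-\ell+1)/2$.

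Set $N_i:=\bigl|\mathfrak{P}_3(n,x_i)\setminus\bigcup_{j<i}\mathfrak{P}_3(n,x_j)\bigr|$; assigning each covered partition to the smallest index containing it gives $\bigl|\bigcup_i\mathfrak{P}_3(n,x_i)\bigr|=\sum_i N_i$. The key inequality is $N_i\le n/2-(i-1)$. Since Lemma~\ref{l:2}(ii) gives $p_3(n,x_i)\le n/2$, it is enough to produce at least $i-1$ distinct partitions in $\mathfrak{P}_3(n,x_i)\cap\bigcup_{j<i}\mathfrak{P}_3(n,x_j)$. I would do this by double-counting the ordered pairs $(j,\mathfrak{p})$ with $j<i$ and $\mathfrak{p}\in\mathfrak{P}_3(n,x_i)\cap\mathfrak{P}_3(n,x_j)$: Lemma~\ref{l:3} contributes exactly $2$ such pairs for each $j<i$, giving a total of $2(i-1)$, while Lemma~\ref{empty} (the empty $4$-fold intersection) forces each fixed $\mathfrak{p}\in\mathfrak{P}_3(n,x_i)$ to appear in at most two pairs, so dividing yields at least $i-1$ distinct partitions in the intersection.

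I expect this dovetailing of Lemmas~\ref{l:3} and~\ref{empty} via double counting to be the substantive step; the rest is bookkeeping. Indeed, $\sum_{i=1}^\ell(n/2-(i-1))=\ell n/2-\binom{\ell}{2}=\ell(n-\ell+1)/2$, which is the desired bound. No truncation issue arises because $x_i<n/2$ forces $\ell\le\lfloor(n-1)/2\rfloor$, so each term $n/2-(i-1)$ is positive; and if the $\ell$ given subgroups happen to represent fewer than $\ell$ distinct classes, one applies the bound with the smaller count $\ell'\le\ell$ and invokes the monotonicity of $\ell\mapsto\ell(n-\ell+1)$ on the relevant range to recover the stated inequality.
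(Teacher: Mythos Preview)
Your proof is correct, but it takes a genuinely different route from the paper's. The paper applies inclusion--exclusion directly: using Lemma~\ref{l:2}(ii) for the first sum, Lemma~\ref{l:3} to get exactly $\ell(\ell-1)$ from the pairwise terms, Lemma~\ref{l:4} to identify the nonempty triple intersections and then bound their number by $\ell(\ell-1)/2$ via an explicit injection, and Lemma~\ref{empty} to truncate. Your telescoping argument bypasses the triple-intersection bookkeeping entirely: by double-counting pairs $(j,\mathfrak{p})$ you extract the bound $N_i\le n/2-(i-1)$ using only Lemmas~\ref{l:3} and~\ref{empty}, never invoking Lemma~\ref{l:4} directly. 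This is a cleaner packaging of the same information---the paper's approach is more transparent about where each loss occurs in the inclusion--exclusion, while yours is shorter and makes the summation $\sum_i(n/2-(i-1))=\ell(n-\ell+1)/2$ immediate. One small caveat: your final monotonicity remark (to handle repeated conjugacy classes) requires $\ell\le (n+1)/2$, which the lemma statement does not assume; this is harmless in the paper's application but is worth noting.
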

\begin{proof}  Let $\mathcal{I}:=\{P_{x_i} \mid i\in\{1,\dots,\ell\}\}$ be a set of $\ell$  intransitive maximal subgroups of $\Sym(n)$, so that  $1\leq x_i<n/2\,$ for each $i\in\{1,\dots,\ell\}.$ Put $X:=\{x_1,\dots, x_{\ell}\}.$ Then the number of $3$-partitions of $\Sym(n)$ covered by the subgroups in $\mathcal{I}$ is the cardinality of the set $\bigcup_{i=1}^\ell\mathfrak{P}_3(n,x_i)$, where the sets $\mathfrak{P}_3(n,x_i)$ are defined in \eqref{cluster}. We estimate this by the inclusion-exclusion principle.
Consider the sets
\begin{eqnarray*}
&A:=&\{(x_i,x_j)\in X^2 \mid x_i<x_j\},\\
&B:=&\{(x_i,x_j,x_k)\in X^3 \mid x_i<x_j<x_k ,\, x_k=x_i+x_j \hbox{ or } x_k=n-x_i-x_j\},\\
&C:=&\{(x_i,x_j)\in X^2 \mid x_i<x_j \hbox{ and } x_i+x_j\in X \hbox{ or } n-x_i-x_j\in X \}.
\end{eqnarray*}
and note that $|A|=\frac{\ell(\ell-1)}{2}$ and $|C|\leq  \frac{\ell(\ell-1)}{2}.$

Define now the map $f:C\rightarrow X^3$, given for every $(x_i,x_j)\in C$, by
\begin{equation}\label{f-int}
f(x_i,x_j):=\left\{ \begin{array}{ll}
(x_i,x_j, x_i+x_j)  & {\text{\rm if}} \  x_i+x_j\in X,\\
& \\
(x_i,x_j, n-x_i-x_j)  & {\text{\rm if}} \  n-x_i-x_j\in X.
\end{array}
\right.
\end{equation}
Note that $f$ is well defined since  we cannot have both $x_i+x_j\in X$ and $n-x_i-x_j\in X$, as the numbers in $X$ are less than $n/2$. Moreover,
$f$ is injective and $B\subseteq f(C)$, and hence $|B|\leq |f(C)|= |C|\leq \frac{\ell(\ell-1)}{2}.$
Then, by Lemmas~\ref{l:2}\,ii),~\ref{l:3},~\ref{l:4} and~\ref{empty}, we get
\begin{align*}
|\bigcup_{i=1}^\ell\mathfrak{P}_3(n,x_i)|= &\sum_{i=1}^{\ell}|\mathfrak{P}_3(n,x_i)|-\sum_{x_i<x_j}|\mathfrak{P}_3(n,x_i)\cap \mathfrak{P}_3(n,x_j)|\\&+\sum_{x_i<x_j<x_k}|\mathfrak{P}_3(n,x_i)\cap \mathfrak{P}_3(n,x_j)\cap \mathfrak{P}_3(n,x_k)|\\
\leq& \frac{\ell n}{2}-2|A|+|B|\leq \frac{\ell n}{2}-2\frac{\ell(\ell-1)}{2}+\frac{\ell(\ell-1)}{2}\\
\leq& \frac{\ell n-\ell(\ell-1)}{2}.~\qedhere
\end{align*}
\end{proof}

Extending our approach to deal with $\Sym(n)$ for $n$ odd, and $\Alt(n)$ for $n$ even,
would entail generalising Lemmas~\ref{l:3},~\ref{l:4} and~\ref{empty} for $4$-partitions with clusters, in order to control the intersection of finite families of sets $\mathfrak{P}_4(n,x)$.
This does not seem to be an easy adaptation, and as a consequence it is not clear how to obtain an extension of Lemma~\ref{l:new} to $4$-partitions. In addition to these difficulties, note that no exact formula for $p_4(n)'$ is known.

\section{ Coprime $3$-partitions covered by primitive subgroups }\label{sec:5}

In this section we study  partitions in the set $\mathfrak{P}_3(n)'$ defined in \eqref{kcopset} covered by a primitive proper subgroup, that is, a primitive subgroup of $\Sym(n)$ not containing $\Alt(n)$.

\begin{lemma}\label{-2} Let $n\in \mathbb{N}$, with $n\geq 3$. If there exists a proper primitive group of degree $n$ containing a permutation of type $\mathfrak{p}=[x_1,x_2, x_3]\in \mathfrak{P}_3(n)',$
 then $n$ and $\mathfrak{p}$ are listed in {\rm Table ~\ref{table1}} when $n$ is even, and in {\rm Table ~\ref{table2}} when $n$ is odd.
\end{lemma}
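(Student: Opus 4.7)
The plan is to invoke the classification, given in \cite{GMPS1, GMPS}, of finite primitive permutation groups that contain a permutation with at most four cycles. Since a permutation of type $[x_1,x_2,x_3]$ has exactly three cycles (a fortiori, at most four), that classification yields a finite explicit list of candidate pairs $(G,n)$ with $G\leq \Sym(n)$ primitive and $G$ containing a permutation with three cycles. The task is then to weed out of this list the cases in which $G\supseteq \Alt(n)$ (since we want $G$ to be \emph{proper}) and, for the remaining ones, to identify precisely which cycle types $[x_1,x_2,x_3]$ with $\gcd(x_1,x_2,x_3)=1$ can actually occur.

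First I would isolate the generic infinite family that arises from projective linear groups. For each prime power $q$ and each decomposition $d=d_1+d_2$ with $\gcd(d_1,d_2)=1$, the group $\mathrm{PGL}_d(q)$ acting on the $n=(q^d-1)/(q-1)$ points of projective space contains an element whose cycle type is the coprime $3$-partition singled out in Lemma~\ref{family}; any group $H$ with $\mathrm{PSL}_d(q)\leq H\leq \mathrm{P}\Gamma\mathrm{L}_d(q)$ is primitive of the same degree and inherits the same cycle type. Organising these pairs by the parity of $n=(q^d-1)/(q-1)$ produces the main rows of Tables~\ref{table1} and~\ref{table2}.

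Next I would work case-by-case through the remaining entries in the classification of \cite{GMPS1, GMPS}: primitive groups of affine type, almost simple groups in actions other than on projective points, the sporadic low-degree exceptions, and primitive groups of wreath-product (product action) type. For each entry I would read off, either from the tables of element orders in \cite{GMPS1, GMPS}, from the Atlas, or by a direct combinatorial argument, the possible cycle lengths of a three-orbit element; any pair $(n,\mathfrak{p})$ failing the coprimality condition $\gcd(x_1,x_2,x_3)=1$ or yielding $G\supseteq \Alt(n)$ is then discarded, and the surviving pairs are sorted by parity of $n$ to populate the two tables.

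The hard part will be the bookkeeping in the sporadic and small-degree portion of the classification: each such entry must be handled individually, and care is required to avoid both missing a subgroup containment that would force $G\supseteq \Alt(n)$ and double-listing a cycle type that appears in several primitive classes (for instance, when several almost simple groups act on the same set, or when a projective action coincides with another natural action for small $d$ and $q$). By contrast, the projective linear family is conceptually uniform and is handled completely by Lemma~\ref{family}, so the genuine technical effort is concentrated in the finitely many exceptional entries of the GMPS list.
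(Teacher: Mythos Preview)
Your proposal is correct and follows essentially the same approach as the paper: invoke the classification in \cite{GMPS1,GMPS} of primitive groups containing a permutation with at most four cycles, then filter the resulting tables for coprime $3$-partitions, with Lemma~\ref{family} handling the projective linear family. The paper's own proof is in fact even more terse than your plan---it simply cites \cite[Theorem~1.1]{GMPS} and directs the reader to inspect Tables~5--7 of \cite{GMPS1} and Tables~1--5 of \cite{GMPS}---so your more detailed description of the case-by-case bookkeeping is, if anything, an expansion of what the paper records.
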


\begin{proof}
The proof follows from \cite[Theorem 1.1]{GMPS} after a careful inspection of Tables~5--7 in~\cite{GMPS1} and Tables~1--5 in~\cite{GMPS} to select the coprime $3$-partitions. The fact that the partitions in the last lines of   {\rm Tables~\ref{table1} and~\ref{table2}} are coprime is guaranteed by Lemma~\ref{family}.
\end{proof}

We emphasise that on lines $1$-$2$ of Table ~\ref{table1}  and lines $1$-$4$ of Table~\ref{table2}, the column headed Types contains a complete list of such partitions.

\begin{table}[!h]\caption{Primitive types: $n$ even}
\begin{tabular}{|c|c|c|}\hline
Degree $n$&Types &Comments\\\hline
\vspace{-7mm}\\
& &\\
$10$& $[1,3,6],\ [1,1,8]$&\\
\vspace{-7mm}\\
& &\\
$22$&$ [1,7,14]$&\\
\vspace{-7mm}\\
& &\\
$2^a$& $[2, 2^{a-1}-1, 2^{a-1}-1]$&$a\geq 2$\\
\vspace{-7mm}\\
& &\\
$\frac{q^d-1}{q-1}$&$\left[\frac{q^{d_1}-1}{q-1},\,\frac{q^{d_2}-1}{q-1},\,\frac{(q^{d_1}-1)(q^{d_2}-1)}{q-1}\right]$&$q$ odd prime power, $d_1,d_2\geq 1$,\\
&& $d=d_1+d_2$ even, $\gcd(d_1,d_2)=1$\\\hline
\end{tabular}
\label{table1}
\end{table}

\begin{table}[!h]\caption{Primitive types: $n$ odd}

\begin{tabular}{|c|c|c|}\hline

Degree&Types&Comments\\\hline
\vspace{-7mm}\\
& &\\
$7$&$[1,3,3],\ [1,2,4]$&\\
\vspace{-7mm}\\
& &\\
$11$&$[1,5,5],\ [2,3,6],\ [1,2,8]$&\\
\vspace{-7mm}\\
& &\\
$17$&$[1,8,8],\ [2,3,12],\ [2,5,10]$&\\
\vspace{-7mm}\\
& &\\
$23$&$[1,11,11],\ [2,7,14],\  [3,5,15]$&\\
& &\\
\vspace{-7mm}\\
$p^a$&$[1,\frac{p^a-1}{2},\frac{p^a-1}{2}]$& $p$ odd, $a\geq 1$\\
& &\\
\vspace{-7mm}\\
$p^2$&$[1,p-1,p(p-1)]$& $p$ odd\\
& &\\
\vspace{-7mm}\\
$\frac{q^d-1}{q-1}$&$\left[\frac{q^{d_1}-1}{q-1},\,\frac{q^{d_2}-1}{q-1},\,\frac{(q^{d_1}-1)(q^{d_2}-1)}{q-1}\right]$& $q$ prime power, $d_1,d_2\geq 1$,\\
&& $d=d_1+d_2,$ $\gcd(d_1,d_2)=1,$\\
&& $q$ odd when $d$ is odd\\\hline

\end{tabular}
\label{table2}
\end{table}
Before proving our main result, in the next section, we need some technical number theoretic
facts.

\begin{lemma}\label{phi} If $d\in\mathbb{N}$ is even, then $\varphi(d)\leq d/2.$
\end{lemma}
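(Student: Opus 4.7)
The plan is to invoke the standard Euler product formula for the totient function:
\[
\varphi(d)=d\prod_{\substack{p\mid d\\p\,\mathrm{prime}}}\left(1-\frac{1}{p}\right).
\]
Since $d$ is even, the prime $2$ appears among the divisors, so the factor $(1-1/2)=1/2$ occurs in the product. Every other factor $(1-1/p)$ with $p$ an odd prime divisor of $d$ lies in $(0,1)$, hence contributes a quantity at most $1$.

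Therefore I would isolate the $p=2$ factor and bound the rest trivially:
\[
\varphi(d)=d\cdot\frac{1}{2}\prod_{\substack{p\mid d\\p\,\mathrm{odd\ prime}}}\left(1-\frac{1}{p}\right)\le \frac{d}{2},
\]
which is the claimed inequality.

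An equivalent and perhaps slightly more elementary route, in case one prefers to avoid quoting the Euler product, is to write $d=2^a m$ with $a\ge 1$ and $m$ odd, and use the multiplicativity of $\varphi$ together with $\varphi(2^a)=2^{a-1}$ and $\varphi(m)\le m$ to get $\varphi(d)=2^{a-1}\varphi(m)\le 2^{a-1}m=d/2$. There is no real obstacle here; the statement is a textbook consequence of the multiplicativity of $\varphi$ and the explicit value $\varphi(2^a)=2^{a-1}$, and is being recorded in the paper purely as a convenient lemma for the arguments in the next section.
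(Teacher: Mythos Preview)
Your proposal is correct, and in fact your second route is exactly the paper's proof: write $d=2^{k}v$ with $v$ odd, then $\varphi(d)=\varphi(2^{k})\varphi(v)=2^{k-1}\varphi(v)\le 2^{k-1}v=d/2$. The Euler-product version you gave first is an equivalent and equally short way to see the same thing.
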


\begin{proof}
Let $d=2^kv$ with $k,v\in \mathbb{N}$ and $v$ odd: then $\varphi(d)=\varphi(2^k)\varphi(v)=2^{k-1}\varphi(v)\leq 2^{k-1}v=d/2.$
\end{proof}



We now define, for each $n\in\mathbb{N}$, the set
\begin{equation}\label{Q}
Q(n):=\left\{(q,d)\in \mathbb{N}^2\mid  n=\frac{q^d-1}{q-1}, q \hbox{ is a prime power}, d \geq 2\right\}.
\end{equation}

\begin{lemma}\label{-1} Let $n\in\mathbb{N}$, with $n\geq 3$.
Then the following hold:
\begin{itemize}
\item[i)] $|Q(n)|\le \omega(n-1)$;
\item[ii)]if $(q,d)\in Q(n),$ then $d < \log_q(n)+1$;
\item[iii)] if $(q,2)\in Q(n),$ then $|Q(n)|=1.$
 \end{itemize}
\end{lemma}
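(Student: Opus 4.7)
The three parts are all elementary number theory, built from the identity $n-1 = q(1+q+\cdots+q^{d-2}) = q\cdot\frac{q^{d-1}-1}{q-1}$.

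For part (i), my plan is to define the map $\Phi:Q(n)\to\{\text{primes dividing }n-1\}$ sending $(q,d)$ to the unique prime $p$ with $q=p^a$ for some $a\ge 1$, and to show $\Phi$ is well-defined and injective. The map is well-defined because $n-1=q(1+q+\cdots+q^{d-2})$ is divisible by $q$ and hence by $p$. For injectivity, I would show that the pair $(q,d)$ is entirely recovered from $p$: if $q=p^a$, then since $d\ge 2$ the sum $1+q+q^2+\cdots+q^{d-2}$ is either $1$ (when $d=2$) or $\equiv 1\pmod p$ (when $d\ge 3$, since each of the remaining summands is divisible by $p$). In either case $v_p(n-1)=v_p(q)=a$, so $a$ is forced, and then $q=p^a$ is forced; finally $d$ is forced by $n(q-1)=q^d-1$. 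Thus $|Q(n)|=|\Phi(Q(n))|\le \omega(n-1)$.

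For part (ii), the key inequality is the strict version of the trivial lower bound on a geometric sum. Since $d\ge 2$,
\[
n \;=\; 1+q+q^2+\cdots+q^{d-1} \;>\; q^{d-1},
\]
the strict inequality coming from the leading $1$. Taking logarithms base $q$ gives $d-1<\log_q n$, i.e.\ $d<\log_q(n)+1$.

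For part (iii), my plan is to combine (i) with the observation that $(q,2)\in Q(n)$ forces $n-1=q$, which is a prime power, so $\omega(n-1)=1$; since $|Q(n)|\ge 1$ (as $(q,2)$ is a witness) and $|Q(n)|\le\omega(n-1)=1$ by part (i), equality holds. I do not anticipate any real obstacle; the only subtle point is the $p$-adic computation in (i), specifically the case analysis $d=2$ versus $d\ge 3$ to conclude that the cofactor $1+q+\cdots+q^{d-2}$ is coprime to $p$, and this is immediate.
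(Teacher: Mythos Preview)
Your proof is correct and follows essentially the same approach as the paper: for (i) you both show that each $(q,d)\in Q(n)$ is uniquely determined by the prime base of $q$ via the observation that $q$ is exactly the $p$-part of $n-1$, for (ii) you both use $n>q^{d-1}$, and for (iii) you both observe that $n-1$ is a prime power and invoke (i). The only cosmetic difference is that you phrase (i) using the map $\Phi$ and the valuation $v_p$, while the paper argues directly that two representations with the same prime base must coincide.
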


\begin{proof} i)  We first observe that if $(p^a,d)\in Q(n)$, for some prime number $p$ and some positive integers $a$ and $d$, with $d\ge 2$, then $p$ divides $n-1,$
because $n-1=p^a\,\frac{p^{a(d-1)}-1}{p^a-1}.$
Let $p$ be a prime number and let $a$ and $b$ be positive integers such that $(p^a,d_a),(p^b,d_b)\in Q(n)$, for some positive integers $d_a$ and $d_b$ with $d_a,d_b\ge 2$. Then
$$
p^{a(d_a-1)}+p^{a(d_a-2)}+\cdots +p^{2a}+p^a=n-1=p^{b(d_b-1)}+p^{b(d_b-2)}+\cdots +p^{2b}+p^b.
$$
From the left hand side we see that $p^a$ is the largest power of $p$ dividing $n-1$, whereas, from the right hand side with see that $p^b$ is the largest power of $p$ dividing $n-1$. Therefore $a=b$ and hence $d_a=d_b$. This proves that, for every prime number $p$, the equation
$$n=\frac{p^{ad}-1}{p^a-1}$$
has at most one solution in integers $a,d$ with $a\ge 1$ and $d\ge 2$. Hence, $|Q(n)|\le \omega(n-1)$.

ii) Let $(q,d)\in Q(n)$. Then  $n=\frac{q^d-1}{q-1}$ and thus $n>q^{d-1}.$  It follows that $d-1\leq \log_q(n)$.

iii) Assume that $(p^a,2)\in Q(n)$, for some prime number $p$ and some positive integer $a$. Then  $Q(n)\neq \varnothing$ and $n-1=p^a$, so that $p$ is the only prime dividing $n-1$. By i), we then have $|Q(n)|\le \omega(n-1)=1$ and therefore $|Q(n)|=1.$
\end{proof}

In fact we do not know whether there could be more than one integer solution to the equation $n=\frac{q^d-1}{q-1}$, when $n$ is even, $q$ is an odd prime power, and $d$ is  even. On the other hand, it was first observed by Goormaghtigh in $1917$ that there exist examples  with more than one solution when $n$ is odd. For instance $\frac{2^5-1}{2-1}=31=\frac{5^3-1}{5-1}$.
For a discussion and for some relatively recent results on the Diophantine equation $$\frac{x^\ell-1}{x-1}=\frac{y^k-1}{y-1},$$ we refer the reader to the work of Bugeaud and Shorey~\cite{Yann}.

\begin{lemma}\label{numb-cop}Let $n\in\mathbb{N}$, with $n\geq 3$.
\begin{itemize}
\item[i)]If $n$ is even, then the number of coprime $3$-partitions of $n$ belonging to some proper primitive group of degree $n$ is at most $\frac{\log_3(n)+1}{4}\omega(n-1)+2.$
\item[ii)]If  $n$ is  odd, then the number of coprime $3$-partitions of $n$ belonging to some proper primitive group of degree $n$ is at most $\frac{\log_2(n)+1}{2}\omega(n-1)+2$.
\end{itemize}
 \end{lemma}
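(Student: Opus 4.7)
My plan is to apply Lemma~\ref{-2} and to reduce the problem to enumerating the entries of Table~\ref{table1} (for part (i)) or Table~\ref{table2} (for part (ii)). Each of these tables splits naturally into a short list of \emph{sporadic} rows (applying only for specific degrees $n$) and a single \emph{infinite family} row parametrised by $(q,d)\in Q(n)$; I bound the two contributions separately and then add them.

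For the infinite family, the coprime $3$-partitions associated with a fixed pair $(q,d)\in Q(n)$ are in bijection with unordered pairs $\{d_1,d_2\}$ satisfying $d_1+d_2=d$ and $\gcd(d_1,d_2)=1$; the number of such pairs is $\lceil\varphi(d)/2\rceil$. In part (i), $q$ is an odd prime power (so $q\geq 3$) and $d$ is even: Lemma~\ref{phi} gives $\varphi(d)\leq d/2$, hence $\lceil\varphi(d)/2\rceil\leq d/4$ for $d\geq 4$, while the degenerate case $d=2$ yields exactly one partition, and here Lemma~\ref{-1}(iii) forces $|Q(n)|=1$, so this single partition is absorbed into the additive $+2$. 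Combining with Lemma~\ref{-1}(i), (ii) (which give $|Q(n)|\leq \omega(n-1)$ and $d<\log_q(n)+1\leq\log_3(n)+1$) and summing over $(q,d)\in Q(n)$ produces the bound $\frac{\log_3(n)+1}{4}\omega(n-1)$ for the infinite-family contribution. Part (ii) is analogous: now $q\geq 2$, and the bound $\lceil\varphi(d)/2\rceil\leq d/2$ holds for all $d\geq 2$ (including $d=2$, since $\lceil 1/2\rceil=1=2/2$); combined with $d<\log_2(n)+1$ and $|Q(n)|\leq\omega(n-1)$, this yields the bound $\frac{\log_2(n)+1}{2}\omega(n-1)$.

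For the sporadic rows, in Table~\ref{table1} the three conditions $n=10$, $n=22$ and $n=2^a$ (rows 1-3) are pairwise incompatible and contribute at most $2$, $1$, $1$ coprime $3$-partitions respectively, hence at most $2$ in total. In Table~\ref{table2}, rows~5 and 6 contribute at most one partition each (row~6 requiring $n=p^2$), and rows~1-4 are non-empty only for $n\in\{7,11,17,23\}$. For $n\notin\{7,11,17,23\}$, the sporadic total is therefore at most $2$, and adding the infinite-family bound gives the claim. The four exceptional degrees $n\in\{7,11,17,23\}$ I verify by direct inspection of Table~\ref{table2}: in each case, rows~1-4 list at most three partitions, row~5 duplicates one of them, and row~7 contributes at most one further partition, for a total of at most four, while the right-hand side $\frac{\log_2(n)+1}{2}\omega(n-1)+2$ exceeds $4.5$ in the tightest case $n=17$ (where $\omega(16)=1$ and $\log_2(17)+1\approx 5.09$).

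The main technical obstacle is the $d=2$ case of the infinite-family bound, which requires Lemma~\ref{-1}(iii) to keep $|Q(n)|$ under control, together with the direct verification of the four exceptional degrees in part (ii). Once these are handled, adding the sporadic and infinite-family bounds yields both claimed inequalities.
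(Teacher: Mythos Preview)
Your approach is essentially the same as the paper's: both invoke Lemma~\ref{-2} to reduce to the tables, then bound the infinite family via $|Q(n)|\le\omega(n-1)$ and $d<\log_q(n)+1$, and both isolate the degenerate $d=2$ case using Lemma~\ref{-1}(iii). The paper differs only in organisation: rather than adding a ``sporadic'' bound and an ``infinite-family'' bound, it treats the values $n\in\{10,22\}$ and $n=2^a$ as standalone cases (using Zsigmondy's theorem to show that $n=2^a$ forces $d=2$ in the last row), counts the partitions in each such case directly, and handles the remaining even $n$ using only the last row of Table~\ref{table1}.

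There is, however, a genuine gap in your additive bookkeeping. You argue that the sporadic rows contribute at most~$2$ and that the single $d=2$ partition ``is absorbed into the additive~$+2$''. But for $n=10$ the sporadic contribution already equals~$2$ (namely $[1,3,6]$ and $[1,1,8]$ from row~1), and $(9,2)\in Q(10)$ so the $d=2$ case of row~4 \emph{also} contributes a partition. Your additive count would then give $2+1=3$, which exceeds the claimed bound $\frac{\log_3(10)+1}{4}\omega(9)+2\approx 2.77$. The argument is rescued only by the observation---which you do not make---that the $d=2$ partition $[1,1,n-2]=[1,1,8]$ is already among the two sporadic partitions for $n=10$, so the distinct total is still~$2$. (For $n=2^a$ this overlap does \emph{not} occur: e.g.\ for $n=8$ the partitions $[2,3,3]$ and $[1,1,6]$ are genuinely different, but there the sporadic row contributes only~$1$, so $1+1=2$ still fits.) You need either to verify this overlap explicitly, or to follow the paper in handling $n=10$ as its own case; otherwise the ``absorbed into the~$+2$'' step is unjustified precisely where it matters.
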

\begin{proof}
Assume that there exists at least one coprime $3$-partition $\mathfrak{p}$ of $n$ belonging to some proper primitive group of degree $n$. Then, by Lemma~\ref{-2},  the possibilities for $n$ and $\mathfrak{p}$ are listed in {\rm Table~\ref{table1}} when $n$ is even and in Table~\ref{table2} when $n$ is odd.

We start with a preliminary number theoretic observation. Suppose $2^a=n=\frac{q^d-1}{q-1}$ for some positive integers $a\ge 2$, $d\ge 2$ and some prime power $q$. Then $2$ is the only prime factor of $(q^d-1)/(q-1)$ and hence $q^d-1$ has no primitive prime divisor. From a celebrated theorem of Zsigmondy, we then have $d=2$ and $n-1=q$ is prime.

\smallskip
i)  Suppose that $n$ is even, so $n\geq4$. Suppose first that $n=2^a$ and $n-1=p$, for some positive integer $a\ge 2$ and some prime number $p$. From Table~\ref{table1} and the remark above, we deduce  that $[1,1,2^a-2]$ and $[2,2^{a-1}-1,2^{a-1}-1]$ are the only possibilities for $\mathfrak{p}$.
Suppose next that $n=2^a$ and $n-1$ is not prime. From Table~\ref{table1} and the remark above, we deduce that $[2,2^{a-1}-1,2^{a-1}-1]$ is the only possibility for $\mathfrak{p}$. Similarly, if $n=22$, then $[1,7,14]$ is the only possibility for $\mathfrak{p}$; and, if $n=10$, then $[1,3,6]$ and $[1,1,8]$ are the only possibilities for $\mathfrak{p}$.

In the light of Table~\ref{table1}, it remains to consider those even integers $n$ such that
\begin{equation}\label{formq}
n=\frac{q^d-1}{q-1},
\end{equation}
 for some odd prime power $q$ and some even integer $d\ge 2$, with $n$ not a $2$-power and with $n\notin \{ 10,\,22\}$. Fix such an integer $n$. Then, recalling the definition \eqref{Q}, the cardinality $|Q(n)|$ is the number of possible representations of $n$ of the form \eqref{formq}.  By Lemma~\ref{-1}\,i),iii), the number of such representations  is, in general, at most $\omega(n-1)$ and unique when $(q,2)\in Q(n),$ for some prime power $q.$
 For each such representation, that is, for each $(q,d)\in Q(n)$, we need an upper bound on the number of $3$-partitions of the form
\begin{equation}\label{d-type}
\left[\frac{q^{d_1}-1}{q-1},\,\frac{q^{d_2}-1}{q-1},\,\frac{(q^{d_1}-1)(q^{d_2}-1)}{q-1}\right],
\end{equation}
 where $d=d_1+d_2$ and $\gcd(d_1,d_2)=1.$ Recall that each such partition is coprime by Lemma \ref{family}.

If $(q,2)\in Q(n)$, for some prime power $q$, since $d=2$,  the only partition of $n$ of type~\eqref{d-type} is  $[1,1,n-2]$. Recalling that in this case $|Q(n)|=1$ the global contribute for the coprime $3$-partitions is $1$.

Assume next that there exists no prime power $q$ such that  $(q,2)$ belongs to $ Q(n)$. Pick $(q,d)\in Q(n)$. Then $d>2$ and the number of unordered solutions  of the equation $d=d_1+d_2$, subject to $\gcd(d_1,d_2)=1$,  is equal to the number $p_2(d)'$ of coprime $2$-partitions of $d$  which, by Lemma~\ref{l:1}\,i), equals $\frac{\varphi(d)}{2}$ (since $d>2$).
Moreover, since $n$ is even, we must have $q$ odd and $d$ even. Thus,  by Lemma~\ref{phi},  $\frac{\varphi(d)}{2}\le \frac{d}{4}$ and, by Lemma~\ref{-1}\,ii),
$d < \log_q(n)+1 \leq \log_3(n)+1$.  Thus each of the at most  $\omega(n-1)$ pairs $(q,d)\in Q(n)$ contributes at most
$\frac{\log_3(n)+1}{4}$ coprime $3$-partitions.

Summarising all the possible cases we obtain that the number of coprime $3$-partitions of $n$ belonging to some proper primitive group of degree $n$ is at most $\frac{\log_3(n)+1}{4}\omega(n-1)+2.$

\smallskip

ii) Now suppose that $n\geq 3$ and $n$ is odd. The proof is similar to the the proof of i) and we omit some details.
The possibilities for $n$ and $\mathfrak{p}$ are listed in {\rm Table ~\ref{table2}}.
Suppose $n=\frac{q^d-1}{q-1}$, for some prime power $q$ and some $d\geq 2$. Arguing as in i), the number of coprime $3$-partitions of type ~\eqref{d-type} is at most
$$
\frac{\varphi(d)}{2} \omega(n-1)\leq \frac{d}{2}\, \omega(n-1)\le\frac{\log_2(n)+1}{2}\omega(n-1).
$$
%
Since a number can be both a $p$-power and of the form $n=\frac{q^d-1}{q-1},$ the number of coprime $3$-partitions of $n$ belonging to some proper primitive group is at most $ \frac{\log_2(n)+1}{2}\omega(n-1)+2$.
\end{proof}

\section{The linear bounds }\label{sec:6}
We begin by defining the functions $\zeta_2:\{n\in\mathbb{N}:n\geq 4\}\rightarrow \mathbb{Q}$  and $f :\{n\in\mathbb{N}:n\geq 4\}\rightarrow \mathbb{R}$ given, for every $n\in \mathbb{N}$ with $n\geq 4$, respectively by
\begin{align*}
\zeta_2(n)&:=\frac{1}{12}\prod_{\substack{p\mid n\\p\,\mathrm{ prime}}}\left(1-\frac{1}{p^2}\right),
&&f(n):=
\begin{cases}
\frac{1}{n^2}\frac{\log_3(n)+1}{4}\omega(n-1)+\frac{2}{n^2}+\frac{2}{n^{1/2}}&\textrm{if }n \textrm{ is even},\\
\frac{1}{n^2}\frac{\log_2(n)+1}{2}\omega(n-1)+\frac{2}{n^2}+\frac{2}{n^{1/2}}&\textrm{if }n \textrm{ is odd}.\\
\end{cases}
\end{align*}
By Lemma~\ref{l:1}\,ii),  $p_3(n)'=\zeta_2(n)n^2 $, and as an immediate consequence of the expression for $\zeta_2(n)$ and the lower bound for it given in Lemma~\ref{l:1}\,ii), we have
\begin{equation}\label{range}
\frac{1}{2\pi^2}< \zeta_2(n)< \frac{1}{12}.
\end{equation}

\begin{theorem}\label{gamma-sym-even}
Suppose that either $G=\Sym(n)$ with $n$ even, $n\ge 4$, or $G=\Alt(n)$ with $n$ odd, $n\geq 5$. Then
 $$
 \gamma(G)\geq \left\lceil\frac{n+1}{2}\left(1-\sqrt{1-8
\zeta_2(n)+8f(n)+\frac{16n+8}{(n+1)^2}(\zeta_2(n)-f(n))}\,\right)\right\rceil.
$$
\end{theorem}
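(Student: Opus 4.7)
The plan is to double-count the coprime $3$-partitions of $n$. On the one hand, by Lemma~\ref{l:1}(ii), their total number is $|\mathfrak{P}_3(n)'|=\zeta_2(n)n^2$. On the other hand, each of them must be covered by some component of a minimal basic set of $G$. First I would verify that this counting is relevant in both cases of the theorem: a permutation with three cycles of lengths $a,b,c$ has sign $(-1)^{n-3}=(-1)^{n-1}$, which is $-1$ when $n$ is even and $+1$ when $n$ is odd, so every $\mathfrak{p}\in\mathfrak{P}_3(n)'$ is realised by an element of $G$ in both situations and thus must be covered. In particular, if $\Alt(n)$ appears as a basic component for $G=\Sym(n)$, it contributes no element of $\mathfrak{P}_3(n)'$, and can be ignored.

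Let $s=\gamma(G)$ and fix a minimal basic set $\delta=\{H_1,\ldots,H_s\}$ whose components are maximal subgroups of $G$. Write $\ell$ for the number of intransitive components; the remaining components are imprimitive or proper primitive (and any proper primitive subgroup of $\Alt(n)$ is also a proper primitive subgroup of $\Sym(n)$, so Lemma~\ref{numb-cop} applies). Three bounds from the previous sections then apply in combination:
\begin{itemize}
\item Lemma~\ref{l:new} bounds the contribution of the $\ell$ intransitive components by $\frac{\ell(n-\ell+1)}{2}$ elements of $\mathfrak{P}_3(n)$, hence also of $\mathfrak{P}_3(n)'$.
\item Lemma~\ref{l:7} bounds the imprimitive contribution by $2n^{3/2}$ elements of $\mathfrak{P}_3(n)'$.
\item Lemma~\ref{numb-cop} bounds the proper primitive contribution by $\frac{\log_3(n)+1}{4}\omega(n-1)+2$ when $n$ is even and $\frac{\log_2(n)+1}{2}\omega(n-1)+2$ when $n$ is odd.
\end{itemize}
The last two sum exactly to $n^2 f(n)$ by the definition of $f$, so the covering requirement reads
\[
\zeta_2(n)n^2\le\frac{\ell(n-\ell+1)}{2}+n^2 f(n),
\]
equivalently
\[
\ell^2-(n+1)\ell+2(\zeta_2(n)-f(n))n^2\le 0.
\]

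In the non-trivial range $\zeta_2(n)>f(n)$, this quadratic has two positive real roots (the discriminant is positive because $\zeta_2(n)<1/12$ by~\eqref{range} gives $8(\zeta_2(n)-f(n))n^2<n^2<(n+1)^2$), between which $\ell$ lies. Hence $s\ge\ell$ is at least the smaller root
\[
L=\frac{n+1-\sqrt{(n+1)^2-8(\zeta_2(n)-f(n))n^2}}{2}=\frac{n+1}{2}\left(1-\sqrt{1-\frac{8(\zeta_2(n)-f(n))n^2}{(n+1)^2}}\,\right).
\]
A short algebraic rewriting using the identity $\frac{8n^2}{(n+1)^2}=8-\frac{16n+8}{(n+1)^2}$ turns the radicand into $1-8\zeta_2(n)+8f(n)+\frac{16n+8}{(n+1)^2}(\zeta_2(n)-f(n))$, matching the expression in the statement; taking the ceiling since $s\in\mathbb{N}$ finishes the proof. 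The substantive content has already been packaged into the preparatory material: Lemmas~\ref{l:3}, \ref{l:4} and \ref{empty} behind the intransitive bound of Lemma~\ref{l:new}, and the classification-based Lemmas~\ref{-2} and \ref{numb-cop} behind the primitive bound. Given these, the argument above is essentially a double count followed by the solution of a quadratic, and that elementary step is where the three types of components are welded together into a single linear lower bound.
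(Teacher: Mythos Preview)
Your proposal is correct and follows essentially the same route as the paper's proof: a minimal basic set with maximal components, the same three bounds (Lemmas~\ref{l:new},~\ref{l:7},~\ref{numb-cop}) combined into the quadratic inequality $\ell^2-(n+1)\ell+2n^2(\zeta_2(n)-f(n))\le 0$, and the smaller root as the lower bound for $\ell\le\gamma(G)$. Your explicit parity check showing that every $\mathfrak{p}\in\mathfrak{P}_3(n)'$ is realised in $G$ (and hence that $\Alt(n)$ contributes nothing as a primitive component when $G=\Sym(n)$ with $n$ even) makes precise what the paper states in one line, and your algebraic rewriting of the radicand is the step the paper leaves implicit.
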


\begin{proof}
Let $G\in \{\Sym(n)\mid n\geq 4 \hbox{ even}\}\cup \{\Alt(n)\mid n\geq 5 \hbox{ odd}\}$
and let $\delta$ be a minimal basic set for $G$ with maximal components.
Let $\mathcal{I}:=\{P_{x_i}\mid i\in \{1,\ldots,\ell\}\}$ be the set of intransitive components in $\delta$
 with $\ell=|\mathcal{I}|$.
Since we are dealing with the symmetric group only when the degree is even, and with the alternating group only when the degree is odd, then the primitive components in $\delta$ covering the coprime $3$-partitions of $n$ are proper primitive subgroups of $G$.

Recall that, by Lemma  \ref{l:1}\,ii), $p_3(n)'=\zeta_2(n)n^2 $. From Lemma~\ref{l:7}, the number of coprime $3$-partitions of $n$ covered by the imprimitive components in $\delta$ is at most $2n^{3/2}$ and, from Lemma~\ref{numb-cop}, the number of coprime $3$-partitions of $n$ covered by the proper primitive components of $\delta$ is at most $\frac{\log_3(n)+1}{4}\omega(n-1)+2$ when $n$ is even, and at most $\frac{\log_2(n)+1}{2}\omega(n-1)+2$ when $n$ is odd. Therefore, the remaining coprime $3$-partitions must be covered by the components in $\mathcal{I}$. Now by Lemma~\ref{l:new}, the number of coprime $3$-partitions covered by $\ell$ intransitive components is at most $\frac{\ell}{2}(n-\ell +1)$. On the other hand it is
$p_3(n)'$ minus the number covered by imprimitive or proper primitive components. Thus, in terms of the functions $\zeta_2$ and $f$
 defined above, we have
\begin{equation}\label{eq:ell}\frac{\ell}{2}(n-\ell+1)\ge \zeta_2(n)n^2-f(n)n^2,
\end{equation}
or equivalently,
\begin{equation}\label{eq:ellnew}\ell^2-(n+1)\ell+2n^2( \zeta_2(n)-f(n))\leq 0.
\end{equation}
Hence,
\begin{equation}\label{eq:ell2}\ell\ge \frac{n+1- (n+1)\sqrt{1-\frac{8n^2}{(n+1)^2}(\zeta_2(n)-f(n))}}{2}=\frac{n+1}{2}\left(1-\sqrt{1-8
\zeta_2(n)+8f(n)+\frac{16n+8}{(n+1)^2}(\zeta_2(n)-f(n))}\,\right).
\end{equation}
Observe that the radicals above are well defined real numbers  because
$1-\frac{8n^2}{(n+1)^2}(\zeta_2(n)-f(n))> 0$. Indeed, this condition is equivalent to $\zeta_2(n)-f(n)< \frac{(n+1)^2}{8n^2}$. Since $f$ is positive, by \eqref{range},  we get  $\zeta_2(n)-f(n)<\zeta_2(n)\leq \frac{1}{12}<\frac{(n+1)^2}{8n^2}$.
\end{proof}

We now derive a new lower bound for $\gamma(G)$ that allows us understand the asymptotics better.

\begin{corollary}\label{cor:nr1} Let $n\in \mathbb{N}$, with $n\geq 4.$
Suppose that either $G=\Sym(n)$ with $n$ even or $G=\Alt(n)$ with $n$ odd. Then
$$
\gamma(G)\ge
\frac{n+1}{2}\left(1-\sqrt{1-8\zeta_2(n)}\right)-\frac{n+1}{2}\sqrt{8f(n)+\frac{16n+8}{(n+1)^2}(\zeta_2(n)-f(n))}.
$$
Moreover, for $n\ge 20$,
$$
\gamma(G)\ge
\frac{n}{2}\left(1-\sqrt{1-8\zeta_2(n)}\right)-
\frac{\sqrt{17}}{2}n^{3/4}\geq  \frac{n}{2}\left(1-\sqrt{1- 4/\pi^2}\right)-\frac{\sqrt{17}}{2}n^{3/4}\ \approx\ 0.114411\, n -\frac{\sqrt{17}}{2}n^{3/4}.
$$
\end{corollary}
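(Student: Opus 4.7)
The plan is to derive Corollary~\ref{cor:nr1} from Theorem~\ref{gamma-sym-even} by three elementary manipulations of the radical expression. Setting $A:=1-8\zeta_2(n)$ and $B:=8f(n)+\frac{16n+8}{(n+1)^2}(\zeta_2(n)-f(n))$, the theorem (after dropping the ceiling) asserts $\gamma(G)\geq \frac{n+1}{2}(1-\sqrt{A+B})$; positivity of both $A$ and $B$ follows from $\zeta_2(n)\leq 1/12$ in~\eqref{range} and $f(n)>0$. For the first inequality of the corollary, I would apply subadditivity of the square root, $\sqrt{A+B}\leq\sqrt{A}+\sqrt{B}$, which immediately yields
$$
\gamma(G)\ \geq\ \frac{n+1}{2}(1-\sqrt{A})\ -\ \frac{n+1}{2}\sqrt{B}.
$$

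For the middle inequality, valid for $n\geq 20$, I would estimate $\tfrac{n+1}{2}(1-\sqrt{A})\geq \tfrac{n}{2}(1-\sqrt{A})$ (legitimate since $\sqrt{A}\leq 1$) and reduce the task to proving $(n+1)\sqrt{B}\leq \sqrt{17}\,n^{3/4}$, equivalently $(n+1)^2B\leq 17\,n^{3/2}$. The telescoping $8(n+1)^2-(16n+8)=8n^2$ gives the clean identity
$$
(n+1)^2B\ =\ 8n^2f(n)+(16n+8)\zeta_2(n)\ =\ 16\,n^{3/2}+2L_n\,\omega(n-1)+16+(16n+8)\zeta_2(n),
$$
where $L_n:=\log_3 n+1$ or $\log_2 n+1$ according as $n$ is even or odd. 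Using $\zeta_2(n)\leq 1/12$ and the trivial bound $\omega(n-1)\leq\log_2 n$, the target reduces to
$$
2L_n\log_2 n\ +\ 16\ +\ \tfrac{4n+2}{3}\ \leq\ n^{3/2},
$$
which I would verify directly at $n=20$ and extend to all larger $n$ by a monotonicity argument on the difference between the two sides. The rightmost inequality of the corollary is then immediate from Lemma~\ref{l:1}(ii): $\zeta_2(n)>1/(2\pi^2)$ forces $1-8\zeta_2(n)<1-4/\pi^2$, and the approximation $\tfrac12(1-\sqrt{1-4/\pi^2})\approx 0.114411$ yields the stated numerical constant.

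The main obstacle I expect is the borderline nature of the numerical check at $n=20$ (and its odd companion $n=21$): the leading $16\,n^{3/2}$ term inside $(n+1)^2B$ already consumes $16/17$ of the right-hand side $17\,n^{3/2}$, leaving only $n^{3/2}$ of slack to absorb the contribution $2L_n\omega(n-1)+16+(4n+2)/3$, whose linear-in-$n$ summand is genuinely nontrivial. The slack only just becomes positive around $n=20$, so the proof must hinge on this marginal estimate rather than any comfortable asymptotic margin; everything else in the argument is routine.
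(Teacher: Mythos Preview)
Your approach is essentially the paper's: subadditivity of the square root gives the first inequality, and then one bounds $\tfrac{n+1}{2}\sqrt{B}$ by $\tfrac{\sqrt{17}}{2}n^{3/4}$. Your identity $(n+1)^2B=8n^2f(n)+(16n+8)\zeta_2(n)$ is in fact a cleaner route to the positivity of $B$ than the paper's (which resorts to explicit computation for $n\le 1559$), and it also streamlines the estimate that follows.

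Two small slips to fix. First, for odd $n$ the definition of $f$ has denominator $2$ rather than $4$, so $8n^2f(n)$ contributes $4L_n\,\omega(n-1)$, not $2L_n\,\omega(n-1)$. Second, checking only $n=20$ and then invoking monotonicity with the crude bound $\omega(n-1)\le\log_2 n$ is not enough: at $n=21$ (odd) the resulting inequality
\[
4(\log_2 21+1)\log_2 21+16+\tfrac{86}{3}\ \le\ 21^{3/2}
\]
reads roughly $139\le 96$ and fails. The paper deals with this by verifying $20\le n\le 63$ directly with a computer and applying the asymptotic estimate only for $n\ge 64$; you should do the same, or at least use the true values of $\omega(n-1)$ when checking the small cases. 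Apart from these easily repaired points, your outline is correct.
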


\begin{proof}
We derive the first expression from the inequality in Theorem~\ref{gamma-sym-even}.
Clearly, $\sqrt{a+b}\le \sqrt{a}+\sqrt{b}$ for non-negative real numbers $a$ and $b$. Thus
\begin{align*}
\sqrt{
1
-8\zeta_2(n)
+8f(n)
+\frac{16n+8}{(n+1)^2}(\zeta_2(n)-f(n))
}&\le
\sqrt{1-8\zeta_2(n)}+\sqrt{
8f(n)+\frac{16n+8}{(n+1)^2}(\zeta_2(n)-f(n))}.
\end{align*}
We now show that  this computation is meaningful because the arguments under the square roots are positive. Indeed, $1-8\zeta_2(n)\ge 0$ because $\zeta_2(n)\le 1/12$. We now consider $8f(n)+\frac{16n+8}{(n+1)^2}(\zeta_2(n)-f(n))$. Using the upper bound $\omega(n-1)\le \log_2(n-1)$, we obtain
\begin{equation}\label{positive}f(n)\le \frac{(\log_2(n)+1)\log_2(n)}{2n^2}+\frac{2}{n^2}+\frac{2}{n^{1/2}}.
\end{equation}
Let $g(n)$ be the function on the right-hand side of~\eqref{positive}.
Calculations from elementary calculus yield $g(n)<1/(2\pi^2)$, for every $n\ge 1560$.
In particular, when $n\ge 1560$, we have $8f(n)+\frac{16n+8}{(n+1)^2}(\zeta_2(n)-f(n))\ge 0$ because $f(n)>0$ and because $\zeta_2(n)>1/(2\pi^2)$. Next, when $n\leq1559$, we have computed explicitly the formula
$8f(n)+\frac{16n+8}{(n+1)^2}(\zeta_2(n)-f(n))$
and we have checked that it is positive.

Now, we show that
\begin{align*}
\frac{n}{2}\sqrt{8f(n)+\frac{16n+8}{(n+1)^2}(\zeta_2(n)-f(n))}\le \frac{\sqrt{17}}{2}n^{3/4},
\end{align*}
from which the rest of the corollary follows. This can be verified directly with the help of a computer when $20\le n\le 63$. In particular, in the rest of our argument we assume that $n\ge 64$.

By squaring both members and re-arranging the terms, this is equivalent to proving that
\begin{align*}
8f(n)+\frac{16n+8}{(n+1)^2}(\zeta_2(n)-f(n))\le \frac{17}{n^{1/2}}.
\end{align*}
As $\zeta_2(n)<1/12$ and $f(n)> 0$, it suffices to show that
\begin{align*}
8f(n)+\frac{16n+8}{12(n+1)^2}\le \frac{17}{n^{1/2}}.
\end{align*}
From~\eqref{positive}, we obtain
\begin{align*}
8f(n)&\le 8\left(\frac{(\log_2(n)+1)\omega(n-1)}{2n^2}+\frac{2}{n^2}+\frac{2}{n^{1/2}}\right)
\le
8\left(\frac{(\log_2(n)+1)\log_2(n)}{2n^2}+\frac{2}{n^2}+\frac{2}{n^{1/2}}\right)\\
&=\frac{4(\log_2(n)+1)\log_2(n)}{n^2}+\frac{16}{n^2}+\frac{16}{n^{1/2}}.
\end{align*}
Computations from calculus yield: $16/n^2<1/(3n^{1/2})$ when $n\ge 14$, $4(\log_2(n)+1)\log_2(n)/n^2<1/(3n^{1/2})$ when $n\ge 21$, $(16n+8)/12(n+1)^2<1/(3n^{1/2})$ when $n\ge 64$. In particular,
$$8f(n)+\frac{16n+8}{12(n+1)^2}\le \frac{1}{3n^{1/2}}+\frac{1}{3n^{1/2}}+\frac{16}{n^{1/2}}+\frac{1}{3n^{1/2}}=\frac{17}{n^{1/2}}.\qedhere$$
\end{proof}

We finally observe that,
$$
\frac{1-\sqrt{1 -\frac{4}{\pi^2}}} {2}>\frac{1}{\pi^2}\approx 0.101321.
$$
The constant $\frac{1}{\pi^2}$  can be obtained by using a ``first term" approach to the inclusion-exclusion proof in Lemma~\ref{l:7} and avoiding the tricky analysis carried out in Section~\ref{intersections} over the intersections of $3$-partitions with clusters. The constant $1/\pi^2$ is the constant proposed by Mar\'oti in his personal communication.

\section{Acknowledgements}
We are profoundly in debted to Attila Mar\'oti for his insights into this problem and for permitting us to use his personal communication.

\thebibliography{20}

 \bibitem{Andrews}G.~E.~Andrews, \textit{The Theory of Partitions}, Encyclopedia of
Mathematics and its Applications, Vol. 2,  Addison-Wesley Publishing
Co., Reading, MA-London-Amsterdam, 1976.

\bibitem{bach} M. E. Bachraoui, Relatively prime partitions with two and three parts,  \textit{Fibonacci Quart.} \textbf{46/47} (2008/09), 341--345.

\bibitem{BBH} R.~Brandl, D.~Bubboloni, and I.~Hupp, Polynomials with roots mod $p$ for all primes $p$, \emph{J. Group Theory} {\bf 4} (2001), 233--239.

\bibitem{BM}J.~R.~Britnell, A.~Mar\'oti, Normal coverings of linear groups,  \textit{Algebra Number Theory}
\textbf{7} (2013), 2085--2102.

\bibitem{JCTA} D.~Bubboloni, C.~E.~Praeger, Normal coverings of the alternating and symmetric groups,  \textit{Journal of
    Combinatorial Theory} Series A {\bf 118} (2011), 2000--2024.
\bibitem{BPS}D.~Bubboloni, C.~E.~Praeger, P.~Spiga, Normal coverings and pairwise generation of finite alternating and symmetric groups, \textit{J. Algebra}  {\bf 390} (2013), 199--215.

\bibitem{IJGT}D.~Bubboloni, C.~E.~Praeger, P.~Spiga, Conjectures on the normal covering number of finite alternating and symmetric groups, \textit{IJGT}, Vol. 3 No. 2 (2014), 57--75.

\bibitem{BFS}D.~Bubboloni, F.~Luca, P.~Spiga, Compositions of $n$ satisfying some coprimality conditions,
\textit{Journal of Number Theory} \textbf{132} (2012), 2922--2946.

\bibitem{BS}D.~ Bubboloni, J.~Sonn, Intersective $S_n$ polynomials with few irreducible factors, \textit{Manuscripta Math.} \textbf{151} (2016), 477--492.

\bibitem{Yann} Y. Bugeaud, T.N. Shorey, On the Diophantine equation $\frac{x^m-1}{x-1}=\frac{y^n-1}{y-1}$, \textit{Pac. Journ. Math.}  {\bf 207} (2002), 61--75.


\bibitem{GMPS1}S.~Guest, J.~Morris, C.~E.~Praeger, P.~Spiga, Affine transformations of finite vector spaces with large orders or few cycles, \textit{J. Pure and Applied Algebra}  {\bf 219} (2015), 308--330.

\bibitem{GMPS}S.~Guest, J.~Morris, C.~E.~Praeger, P.~Spiga, Finite primitive permutation groups containing a permutation having at most four cycles, \textit{J. Algebra}  {\bf 454} (2016), 233--251.

\bibitem{kourovka}E.~I.~Khukhro, V.~D.~Mazurov, Unsolved problems in Group Theory. The Kourovka Notebook, \texttt{arXiv:1401.0300v13[math.GR]}.

\bibitem{Maroti}A.~Mar\'oti, private communication.


\bibitem{NZM}
Ivan Niven,  Herbert S. Zuckerman, and Hugh L.  Montgomery,   \textit{An introduction to the theory of numbers.}
Fifth edition. John Wiley \& Sons, Inc., New York,  1991.

\bibitem{RS}D.~Rabayev, J.~Sonn, J., On Galois realizations of the $2$-coverable symmetric and alternating groups, \textit{Comm. Algebra} \textbf{42} (2014), 253--258.

\end{document}